\newcommand{\shrinkmargins}[1]{
  \addtolength{\textheight}{#1\topmargin}
  \addtolength{\textheight}{#1\topmargin}
  \addtolength{\textwidth}{#1\oddsidemargin}
  \addtolength{\textwidth}{#1\evensidemargin}
  \addtolength{\topmargin}{-#1\topmargin}
  \addtolength{\oddsidemargin}{-#1\oddsidemargin}
  \addtolength{\evensidemargin}{-#1\evensidemargin}
  }
\newtheorem{theorem}{Theorem}
\newtheorem{lemma}[theorem]{Lemma}
\newtheorem{corollary}[theorem]{Corollary}
\newtheorem*{theorem*}{Theorem}
{Claim}
\theoremstyle{definition}
\theoremstyle{remark}
\newtheorem*{remark}{Remark}
\newtheorem*{remarks}{{\bf Remarks}}
\numberwithin{theorem}{section} \numberwithin{equation}{section}
\def\func#1{\mathop{\rm #1}}%
\begin{document}
\title[Bessenrodt--Ono inequalities for $\ell $-tuples]{Bessenrodt--Ono inequalities for $\ell $-tuples of pairwise commuting permutations}
\author{Abdelmalek Abdesselam}
\address{Department of Mathematics, P. O. Box 400137, University of Virginia, Charlottesville, VA 22904-4137, USA}
\email{malek@virginia.edu}
\author{Bernhard Heim}
\address{Department of Mathematics and Computer Science\\Division of Mathematics\\University of Cologne\\ Weyertal 86-90 \\ 50931 Cologne \\Germany}
\address{Lehrstuhl A f\"{u}r Mathematik, RWTH Aachen University, 52056 Aachen, Germany}
\email{bheim@uni-koeln.de}
\author{Markus Neuhauser}
\address{Kutaisi International University, 5/7, Youth Avenue,  Kutaisi, 4600 Georgia}
\address{Lehrstuhl A f\"{u}r Mathematik, RWTH Aachen University, 52056 Aachen, Germany}
\email{markus.neuhauser@kiu.edu.ge}
\subjclass[2020] {Primary 05A17, 11P82; Secondary 05A20}
\keywords{Generating functions, inequalities, partition numbers, symmetric group.}
\begin{abstract}
Let $S_n$ denote the symmetric group. We consider
\begin{equation*}
N_{\ell}(n) := \frac{\left\vert \func{Hom}\left( \mathbb{Z}^{\ell},S_n\right) \right\vert}{n!}
\end{equation*}
which
also counts the number
of $\ell$-tuples $\pi=\left( \pi_1, \ldots, \pi_{\ell}\right) \in S_n^{\ell}$ with
$\pi_i \pi_j = \pi_j \pi_i$ for $1 \leq i,j \leq \ell$ scaled by $n!$.
A recursion formula, generating function, and Euler product have been discovered
by Dey, Wohlfahrt, Bryman and Fulman, and White.
Let $a,b, \ell \geq 2$.
It is known by Bringman, Franke, and Heim, that the Bessenrodt--Ono inequality
\begin{equation*}
\Delta_{a,b}^{\ell}:= N_{\ell}(a) \, N_{\ell}(b) - N_{\ell}(a+b) >0
\end{equation*}
is valid for $a,b \gg 1$ and by Bessenrodt and Ono that
it is valid for $\ell =2$ and $a+b >9$.
In this paper we prove that for each pair $(a,b)$ the sign  of 
$\{\Delta_{a,b}^{\ell} \}_{\ell}$ is getting stable. 
In each case we provide an explicit bound.



The numbers $N_{\ell
}\left( n\right) $ had been identified by Bryan and Fulman as the $n$-th orbifold characteristics, generalizing work
by Macdonald and  Hirzebruch--H\"{o}fer concerning the ordinary
and string-theoretic Euler characteristics of symmetric products, where
$N_2(n)=p(n) $ represents the partition function.
\end{abstract}
\maketitle
\newpage
\section{Introduction and Main Results}
In this paper we consider the Bessenrodt--Ono
inequality \cite{BO16} for commuting $\ell$-tuples in the symmetric group $S_n$.
These sequences had been investigated in number theory, combinatorics, algebra, and
physics. We refer to the work by
Hirzebruch and H\"ofer \cite{HH90}, Bryman and Fulman \cite{BF98},
Liskovets and Mednykh \cite{LM09}, 
Britnell \cite{Br13}, White, \cite{Wh13}, Abdesselam, Brunialti, Doan, and Velie \cite{ABDV24}, Bringman, Franke, and Heim \cite{BFH24}, and
Abdesselam, Heim, and Neuhauser \cite{AHN24}.

We consider the double sequence $\{N_{\ell}(n)\}$ of positive integers
provided by properties of the symmetric group $S_n$ on $n$ elements.
Let $\ell,n  \geq 1$ and 
\begin{equation*}
C_{\ell,n}:= \Big\{ \pi=(\pi_1, \ldots, \pi_{\ell}) \in S_n^{\ell}\, : \,
\pi_j \pi_k = \pi_k \pi_j \text{ for } 1 \leq j,k \leq \ell \Big\}.
\end{equation*}
Every $\pi \in C_{\ell,n}$ corresponds to a group homomorphism from
$\mathbb{Z}^{\ell} $ to $S_n$. We define
\begin{equation*}
N_{\ell}(n):= \frac{\vert C_{\ell,n} \vert}{n!} 
=
\frac{\left\vert 
\func{Hom}\left( \mathbb{Z}^{\ell},S_n\right) \right\vert
}{n!}  \in \mathbb{N}, \text{ with }  N_{\ell}(0):=1.
\end{equation*}
Dey \cite{De65} and Wohlfahrt \cite{Wo77} provide an interesting general recursive approach
to determine the number of homorphisms $n! \, N_G(n):=  \left\vert \func{Hom}\left( G,S_n\right) \right\vert$ 
of a finitely generated group $G$ into $S_n$. Let $g_G(n)$ denote the
number of subgroups of $G$ of index $n$.
Note, there are extensive activities
studying the growth of these numbers in characterizing the group $G$ itself
by Lubotzky and Segal \cite{LS03}:
\begin{equation} 
\label{eq:Wohlfahrt}
N_G(n) = \frac{1}{n} \sum_{k=1}^n g_G(k) \, N_G(n-k), 
\end{equation}
with initial value $N_G(0):=1$. 
Let $G=\mathbb{Z}^{\ell}$
and $g_{\ell}(n):= g_G(n)$. Then $g_1(n)=1$ and $g_2(n) = \sigma(n) = \sum_{d
\mid n} d$.
Therefore, we recover in the case $\ell=2$ from (\ref{eq:Wohlfahrt})  Euler's
well known recursion formula for the partition numbers $p(n)$ \cite{An98, On04}.
The case $\ell =3$ reveals a connection to topology 
as highlighted by Britnell (see the introduction of \cite{Br13}) due to work by
Liskovets and Medynkh \cite{LM09}. They discovered that  
$N_{3}(n)$ counts the number
of non-equivalent $n$-sheeted coverings of a torus. Further, 
the numbers $N_{\ell}(n)$ had been identified by Bryan and Fulman \cite{BF98}
as the $n$-th orbifold characteristics. This generalizes work by
Macdonald \cite{Ma62} and  Hirzebruch and H\"ofer \cite{HH90} concerning the ordinary
and string-theoretic Euler characteristics of symmetric products.
To illustrate the growth of $\{N_{\ell}(n)\}_{\ell,n}$ we have recorded the first values for $0 \leq n \leq 10$ and  $2 \leq \ell \leq 4$ in Table \ref{tab:Nell}. 
Note that $N_{1}(n)=1$.
%
\begin{table}[H]
\[
\begin{array}{crrrrrrrrrrr}
\hline
n & 0 & 1 & 2 & 3 & 4 & 5 & 6 & 7 & 8 & 9 & 10 \\ \hline \hline
N_2(n) & 1 & 1 & 2 &
3 &
5 &
7 &
11 &
15 &
22 & 30 & 42 \\
N_3(n) & 1 & 1 &4&8&21&39&92&170&360&667&1316 \\
N_4(n) &1 & 1 &8&21&84&206&717&1810&5462&13859&38497 \\
\hline
\end{array}
\]
\caption{\label{tab:Nell}
Values for $ 0 \leq n \leq 10$.}
\end{table}
In this paper we investigate
the satisfiability
of the Bessenrodt--Ono inequality $\Delta_{a,b}^{\ell}>0$, where $a,b,\ell \geq 2$:
\begin{equation*} \label{def:BO}
\Delta_{a,b}^{\ell}:= N_{\ell}(a) \, N_{\ell}(b) 
- N_{\ell}(a +b).
\end{equation*}
Bessenrodt and Ono \cite{BO16} proved the first result in this direction.
Utilizing the well-known Lehmer bound for partition numbers, they 
proved for $p(n) = N_{2}(n)$:
\begin{theorem}[Bessenrodt, Ono] \label{th:BO}
Let $a,b \geq 2$ and $a+b >9$. Then $\Delta_{a,b}^2 >0$. Let
\begin{eqnarray*} 
E_{2}&:=& \{(2,6), (6,2), (2,7), (7,2), (3,4),(4,3)\},\\
F_{2}&:=& \{(2,2), (2,3), (3,2), (2,4), (4,2), (2,5), (5,2), (3,3), (3,5), (5,3)\}.
\end{eqnarray*}
Then $\Delta_{a,b}^2=0$ if
and only if $(a,b) \in E_2$ and $\Delta_{a,b}^2<0$ if
and only if $(a,b) \in F_2$.
\end{theorem}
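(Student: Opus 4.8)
The plan is to reduce everything to the partition function, since $N_2(n)=p(n)$, and then combine explicit analytic estimates for $p$ with a finite computation. Assume throughout that $2\le a\le b$ and write $n=a+b$. The argument splits into an \emph{analytic} part, which gives $\Delta_{a,b}^2>0$ for every $n$ above an explicit threshold $n_0$, and a \emph{finite} part, which evaluates $\Delta_{a,b}^2$ directly for $n\le n_0$ and thereby both finishes the proof and identifies the exceptional sets $E_2$ and $F_2$. The conceptual heart is the strict superadditivity of the square root: for $a,b>0$ one has $\sqrt a+\sqrt b>\sqrt{a+b}$, and on the region $a,b\ge2$ the deficit $\sqrt a+\sqrt b-\sqrt{a+b}$ is bounded below by $2\sqrt2-2>0$ (it is increasing in each variable, hence minimal at $(2,2)$). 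Since $\log p(n)=\tfrac{\pi}{6}\sqrt{24n-1}+O(\log n)=\pi\sqrt{2/3}\,\sqrt n+O(\log n)$, the leading term of $\log p(a)+\log p(b)$ beats that of $\log p(a+b)$ by a fixed positive amount, so the whole game is to control the lower-order corrections.

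For the analytic part I would invoke Lehmer's explicit two-sided bounds for $p(n)$ (refinements of the Hardy--Ramanujan estimate), writing $\mu(m):=\tfrac{\pi}{6}\sqrt{24m-1}$. One convenient way to organize the estimate is to first reduce to the extremal case $a=2$: for fixed $n$ the product $p(a)p(b)$ over $2\le a\le b$ with $a+b=n$ is minimized at $a=2$, because $p(a)p(b)\ge p(a-1)p(b+1)$ for $3\le a\le b$, which is the monotonicity $p(a)/p(a-1)\ge p(b+1)/p(b)$ and follows once the ratio $p(m)/p(m-1)$ is decreasing (a consequence of the log-concavity of $p$, valid for all large $m$). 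It then suffices to prove $2\,p(n-2)>p(n)$ for $n\ge n_0$; and since $\mu(n)-\mu(n-2)=\tfrac{\pi}{6}\big(\sqrt{24n-1}-\sqrt{24n-49}\big)\to0$, we have $p(n)/p(n-2)\to1$, so for large $n$ the extra factor $2$ wins with room to spare. Substituting the explicit upper bound for $p(n)$ and lower bound for $p(n-2)$ and carrying the error terms through then yields a concrete admissible $n_0$. (Alternatively one can skip the reduction and bound $\log p(a)+\log p(b)-\log p(a+b)$ from below directly, using that $\mu(a)+\mu(b)-\mu(a+b)$ is positive and bounded away from $0$ on $a,b\ge2$.)

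For the finite part one computes $p(2),\dots,p(n_0)$, forms $\Delta_{a,b}^2=p(a)p(b)-p(a+b)$ for every admissible pair with $a+b\le n_0$, and reads off the signs: the pairs with $\Delta_{a,b}^2=0$ are exactly those in $E_2$, the pairs with $\Delta_{a,b}^2<0$ are exactly those in $F_2$, and all others are positive. This also absorbs the finitely many small $n$ for which the log-concavity-based reduction in the analytic part is not yet available.

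The main obstacle I anticipate is purely quantitative. To obtain the clean boundary $a+b>9$ — rather than some much larger bound, which would leave an unpleasantly long finite check — one must use a version of Lehmer's inequalities that is sharp enough near the transition, and track the $O(1/n)$ and $O(\log n)$ contributions precisely enough, so that the resulting $n_0$ is small; pushing the threshold down to where it meets the finite computation is where the real work lies. A secondary nuisance is that the monotonicity of $p(m)/p(m-1)$ — hence the reduction to $a=2$ — genuinely fails for small $m$, so one must be careful to route those cases through the direct computation rather than the general argument.
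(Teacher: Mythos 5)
This theorem is not proved in the paper under review; it is cited to Bessenrodt--Ono \cite{BO16}, and the paper only remarks that the original argument ``utilizes the well-known Lehmer bound for partition numbers.'' Your proposal is built on the same two pillars the paper attributes to \cite{BO16}: Lehmer's explicit two-sided estimates for $p(n)$ together with the strict superadditivity of the exponent $\mu(n)=\tfrac{\pi}{6}\sqrt{24n-1}$ (whose deficit $\mu(a)+\mu(b)-\mu(a+b)$ is increasing in each variable and hence bounded below by its value at $(2,2)$), followed by a finite direct check to pin down $E_2$ and $F_2$ and to produce the clean threshold $a+b>9$. That is exactly the shape of the original proof, so in outline you are on the right track. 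The one place you deviate is the optional reduction to $a=2$ via the monotonicity of $p(m)/p(m-1)$, i.e.\ log-concavity of $p$; Bessenrodt and Ono bound $p(a)p(b)/p(a+b)$ directly from the Lehmer estimates over the whole range $2\le a\le b$ rather than first pushing the pair to $(2,n-2)$. Your reduction is a legitimate and now-standard shortcut (it appears in several later works on Bessenrodt--Ono-type inequalities), and it buys a cleaner one-parameter analytic inequality $2\,p(n-2)>p(n)$; but it imports an extra hypothesis (log-concavity, valid only from some explicit $m_0$ on), so the small-$a$ and small-$m$ cases must be rerouted to the finite check, which you correctly flag. The two genuine sources of work you identify --- sharpening the explicit constants so the analytic threshold meets the finite computation near $a+b=9$, and handling the non-monotonicity of $p(m)/p(m-1)$ at small $m$ --- are precisely where the effort in \cite{BO16} is concentrated, so your proposal is a faithful blueprint with a modest methodological twist rather than a different proof.
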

Recently, it has been shown for every $\ell \geq 2$ (\cite{BFH24} by Bringmann, Franke, and Heim), that $\Delta_{a,b}^{\ell}>0$ for $a,b \gg 1$.
\begin{remarks}
Let $E_{\ell}$ and $F_{\ell}$ denote the sets of all $(a,b)$, where
$\Delta_{a,b}^{\ell}=0$ and $\Delta_{a,b}^{\ell}<0$, resp.
Numerical investigations suggest that $E_{\ell} = \emptyset $ for $ \ell \geq 3$.
We have recorded the sets $F_{\ell}^{+}$ of all $\left( a,b\right) $
with $\Delta _{a,b}^{\ell }<0$
for $2 \leq \ell \leq 10$ and
$2 \leq b\leq a \leq 1000$
in Table \ref{tab:exceptions}. 
It also seems that the sets $F_{\ell}$ are finite. This does not follow from \cite{BFH24}.
\end{remarks}
\begin{table}[H]
\[
\begin{array}{rl}
\hline
\ell &F_{\ell }^{+}\\ \hline \hline
2&\left\{ \left( 2,2\right) ,\left( 2,3\right) ,\left( 2,4\right) ,\left( 2,5\right) ,\left( 3,3\right) ,\left( 3,5\right) \right\} \\
3&\left\{ \left( 2,2\right) ,\left( 2,3\right) ,\left( 2,4\right) ,\left( 2,5\right) ,\left( 3,3\right) ,\left( 3,4\right) ,\left( 3,5\right) \right\} \\
4&\left\{ \left( 2,2\right) ,\left( 2,3\right) ,\left( 2,4\right) ,\left( 2,5\right) ,\left( 3,3\right) ,\left( 3,4\right) ,\left( 3,5\right) ,\left( 3,7\right) \right\} \\
5&\left\{ \left( 2,2\right) ,\left( 2,3\right) ,\left( 2,4\right) ,\left( 2,5\right) ,\left( 3,3\right) ,\left( 3,4\right) ,\left( 3,5\right) ,\left( 3,7\right) \right\} \\
6&\left\{ \left( 2,2\right) ,\left( 2,3\right) ,\left( 2,4\right) ,\left( 2,5\right) ,\left( 3,3\right) ,\left( 3,4\right) ,\left( 3,5\right) ,\left( 3,7\right) \right\} \\
7&\left\{ \left( 2,2\right) ,\left( 2,3\right) ,\left( 2,4\right) ,\left( 2,5\right) ,\left( 3,3\right) ,\left( 3,4\right) ,\left( 3,5\right) ,\left( 3,7\right) \right\} \\
8&\left\{ \left( 2,2\right) ,\left( 2,3\right) ,\left( 2,4\right) ,\left( 2,5\right) ,\left( 3,3\right) ,\left( 3,4\right) ,\left( 3,5\right) \right\} \\
9&\left\{ \left( 2,2\right) ,\left( 2,3\right) ,\left( 2,4\right) ,\left( 2,5\right) ,\left( 3,3\right) ,\left( 3,4\right) ,\left( 3,5\right) \right\} \\
10&\left\{ \left( 2,2\right) ,\left( 2,3\right) ,\left( 2,4\right) ,\left( 2,5\right) ,\left( 3,3\right) ,\left( 3,4\right) ,\left( 3,5\right) \right\} \\
 \hline
\end{array}
\]
\caption{\label{tab:exceptions} Strict exceptions $F_{\ell }^{+}$ to given $\ell $ for $2 \leq a
\leq b
\leq 1000$
}
\end{table}
\begin{theorem} \label{th:unbounded}
Let $L(\ell)$ be the smallest number such that $\Delta_{a,b}^{\ell}>0$ for all \\ $a,b \geq L(\ell)$. Then $\{L(\ell)\}_{\ell}$ is unbounded.
\end{theorem}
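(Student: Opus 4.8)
The plan is to determine the growth of $N_{\ell}(n)$, for a fixed $n$, as $\ell\to\infty$, and thereby to exhibit arbitrarily large values of $a$ for which $\Delta^{\ell}_{a,a}$ becomes negative once $\ell$ is large enough. The starting point is the Euler product (which is recorded by White \cite{Wh13} and follows from Wohlfahrt's recursion \eqref{eq:Wohlfahrt})
\begin{equation*}
\sum_{n\ge0}N_{\ell}(n)\,q^{n}=\prod_{k\ge1}\bigl(1-q^{k}\bigr)^{-g_{\ell-1}(k)},
\end{equation*}
where $g_{m}(k)$ denotes the number of sublattices of $\mathbb{Z}^{m}$ of index $k$; it is multiplicative, one has $k^{m-1}\le g_{m}(k)$, and $\sum_{a\ge0}g_{m}(p^{a})x^{a}=\prod_{i=0}^{m-1}(1-p^{i}x)^{-1}$, from which $g_{m}(k)\le Q_{k}(m)\,k^{m-1}$ for a polynomial $Q_{k}$ depending only on $k$. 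Expanding the product yields
\begin{equation*}
N_{\ell}(n)=\sum_{\lambda\vdash n}\ \prod_{k\ge1}\binom{m_{k}(\lambda)+g_{\ell-1}(k)-1}{m_{k}(\lambda)},
\end{equation*}
the sum running over partitions $\lambda$ of $n$, with $m_{k}(\lambda)$ the multiplicity of the part $k$. Combining $g^{m}/m!\le\binom{m+g-1}{m}\le(g+m)^{m}$ with the two bounds on $g_{\ell-1}(k)$, I would deduce that for each fixed $n$ there are a constant $c_{n}>0$ and a polynomial $P_{n}$ with
\begin{equation*}
c_{n}\,r_{n}^{\,\ell-2}\ \le\ N_{\ell}(n)\ \le\ P_{n}(\ell)\,r_{n}^{\,\ell-2}\qquad(\ell\ge2),
\end{equation*}
where $r_{n}:=\max_{\lambda\vdash n}\prod_{i}\lambda_{i}$ is the largest product of the parts of a partition of $n$: the lower bound is the single summand of a product-maximizing partition, the upper bound is a crude estimate of all $p(n)$ summands.

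The only fact about $r_{n}$ that I would need is the following. By the classical description of product-maximizing partitions (one may use only parts $2$ and $3$, with at most two $2$'s), one has $r_{m+3}=3\,r_{m}$ for every $m\ge2$; since $r_{4}=4$ and $r_{8}=18$, it follows that for every $a\equiv1\pmod 3$ with $a\ge4$, writing $a=4+3t$,
\begin{equation*}
r_{a}=4\cdot3^{t},\qquad r_{2a}=r_{8+6t}=18\cdot3^{2t},\qquad\text{so that}\qquad r_{a}^{2}=16\cdot3^{2t}<18\cdot3^{2t}=r_{2a}.
\end{equation*}
Thus $r_{a}^{2}<r_{2a}$ holds for arbitrarily large $a$.

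I would then finish as follows. Fix $a\equiv1\pmod3$ with $a\ge4$. Combining the two displayed estimates,
\begin{equation*}
\Delta^{\ell}_{a,a}=N_{\ell}(a)^{2}-N_{\ell}(2a)\ \le\ P_{a}(\ell)^{2}\,r_{a}^{\,2(\ell-2)}-c_{2a}\,r_{2a}^{\,\ell-2}=r_{2a}^{\,\ell-2}\Bigl(P_{a}(\ell)^{2}\bigl(r_{a}^{2}/r_{2a}\bigr)^{\ell-2}-c_{2a}\Bigr).
\end{equation*}
Since $0<r_{a}^{2}/r_{2a}<1$, the quantity $P_{a}(\ell)^{2}(r_{a}^{2}/r_{2a})^{\ell-2}$ tends to $0$, so there is $\ell_{0}(a)$ with $\Delta^{\ell}_{a,a}<0$ for all $\ell\ge\ell_{0}(a)$. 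For such $\ell$ the pair $(a,a)$, whose entries are $\ge a$, violates positivity, whence $L(\ell)>a$. As $a$ may be taken arbitrarily large, $\{L(\ell)\}_{\ell}$ is unbounded.

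I expect the main obstacle to be the first step: showing that the $\ell$-growth of $N_{\ell}(n)$ is governed by the multiplicative invariant $r_{n}$ rather than by $n$ itself, i.e. proving the sandwich $c_{n}r_{n}^{\ell-2}\le N_{\ell}(n)\le P_{n}(\ell)r_{n}^{\ell-2}$. This needs the Euler product, a uniform handle on the sublattice counts $g_{\ell-1}(k)$ for $k\le n$, and the elementary estimates for the binomial coefficients; everything after that is immediate from $r_{a}^{2}<r_{2a}$, and no information about the sizes of the (conjecturally finite) exceptional sets $F_{\ell}$ is required.
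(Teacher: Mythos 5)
Your proof is correct, and at its core it follows the same strategy as the paper: exhibit, for arbitrarily large $a$, a pair $(a,b)$ with both entries at least $a$ such that $\Delta_{a,b}^{\ell}<0$ once $\ell$ is large, by comparing the $\ell$-growth rates of $N_{\ell}(a)$, $N_{\ell}(b)$, $N_{\ell}(a+b)$. In both arguments the pivotal fact is that, for fixed $n$, the rate is $M_{1}(n)^{\ell}$ up to slowly varying factors, where $M_{1}(n)$ is precisely your $r_{n}$ (the maximal product of the parts of a partition of $n$), and that $M_{1}(a)M_{1}(b)/M_{1}(a+b)=8/9<1$ when $a\equiv b\equiv 1\pmod 3$. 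The packaging differs: the paper simply cites its effective Theorem \ref{th:explicit 2} b) ii), whose proof rests on the bounds $A_{\ell}(n)\le N_{\ell}(n)\le p(n)M_{1}(n)^{\ell-1}$ imported from \cite{AHN24}, whereas you re-derive a non-effective sandwich $c_{n}r_{n}^{\ell-2}\le N_{\ell}(n)\le P_{n}(\ell)r_{n}^{\ell-2}$ directly from the Euler product $\prod_{k\ge1}(1-q^{k})^{-g_{\ell-1}(k)}$ and the Gaussian-binomial formula for $g_{m}(p^{a})$, and then specialize to the diagonal pair $(a,a)$. Your route is more self-contained and elementary, at the cost of not producing the explicit threshold in $\ell$ that the paper records. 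One small simplification: the paper's elementary estimate $g_{m}(k)\le\sigma(k)k^{m-1}$ shows that both your $Q_{k}$ and $P_{n}$ can be taken to be constants rather than polynomials.
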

In \cite{BFH24} an asymptotic formula for $N_{\ell}(n)$ for each $\ell$ was given. In this paper we fix $n$. Then we deduce from (\cite{AHN24}, Lemma 3.1)
an asymptotic formula in the $\ell$-aspect. This leads to
the following for $n>1$
\begin{equation} \label{eq:asymp}
N_{\ell }\left( n\right) \sim B
_{\ell }\left( n\right) =\left\{
\begin{array}{cl}
\frac{1}{2^{n/3}\left( \frac{n}{3}\right) !}3^{\left( \ell -1\right) n/3},&n\equiv 0\mod 3,\\
\frac{7}{6}\frac{1}{2^{\frac{n-4}{3}}
\left( \frac{n-4}{3}\right) !}\left( 4\cdot 3^{\frac{n-4}{3}}\right) ^{\ell-1},&n\equiv 1\mod 3,\\
\frac{1}{2^{\frac{n-2}{3}}\left( \frac{n-2}{3}\right) !}\left( 2\cdot 3^{\frac{n-2}{3}}\right) ^{\ell -1},&n\equiv 2\mod 3.
\end{array}
\right. 
\end{equation}
Table \ref{100} illustrates the approximation provided by (\ref{eq:asymp}) for $n=100$.
\begin{table}[H]
\[
\begin{array}{rrrr}
\hline
\ell &N_{\ell }\left( 100\right) &B_{\ell }\left( 100\right) &\frac{B_{\ell }\left( 100\right) }{N_{\ell }\left( 100\right) }\\ \hline \hline
2&1.905692920\cdot 10^{8}&7.651654004\cdot 10^{-30}&0.000000000\\
3&2.173101780\cdot 10^{19}&5.671467739\cdot 10^{-14}&0.000000000\\
4&4.596301073\cdot 10^{31}&4.203737688\cdot 10^{2}&0.000000000\\
5&5.233985605\cdot 10^{44}&3.115844322\cdot 10^{18}&0.000000000\\
6&1.866914831\cdot 10^{58}&2.309488974\cdot 10^{34}&0.000000000\\
7&1.544852719\cdot 10^{72}&1.711811878\cdot 10^{50}&0.000000000\\
8&2.441267698\cdot 10^{86}&1.268808787\cdot 10^{66}&0.000000000\\
9&6.417286329\cdot 10^{100}&9.404513196\cdot 10^{81}&0.000000000\\
10&2.532012849\cdot 10^{115}&6.970701127\cdot 10^{97}&0.000000000\\
20&1.753717670\cdot 10^{266}&3.488814560\cdot 10^{256}&0.000000000\\
30&3.930913320\cdot 10^{420}&1.746140999\cdot 10^{415}&0.000004442\\
40&4.019856937\cdot 10^{576}&8.739382209\cdot 10^{573}&0.002174053\\
50&6.226773355\cdot 10^{733}&4.374034023\cdot 10^{732}&0.070245595\\
60&5.888107470\cdot 10^{891}&2.189190629\cdot 10^{891}&0.371798687\\
70&1.523836227\cdot 10^{1050}&1.095683203\cdot 10^{1050}&0.719029502\\
80&6.087303965\cdot 10^{1208}&5.483860866\cdot 10^{1208}&0.900868578\\
90&2.835124887\cdot 10^{1367}&2.744655564\cdot 10^{1367}&0.968089828\\
100&1.387516910\cdot 10^{1526}&1.373691701\cdot 10^{1526}&0.990036007\\ \hline
\end{array}
\]
\caption{\label{100} Comparison of $N_{\ell }\left( n\right) $ and $B_{\ell }\left( n\right) $ for $n=100$ and $\ell =2,3,\ldots ,10,20,\ldots ,100$}
\end{table}


This leads to the following result capturing what happens for each pair $(a,b)$ for almost all $\ell$.
\begin{theorem}\label{th:almost}
Let $a,b, \ell \geq 2$ and 
$\Delta_{a,b}^{\ell}= N_{\ell}(a) N_{\ell}(b) - N_{\ell}(a+b)$.

\begin{itemize}
\item
Let $a \equiv 0 \pmod{3}$. \\ Then $\Delta_{a,b}^{\ell} >0$ for almost all $\ell$ if
$(a,b) \neq \left( 3,2\right) , \left( 3,4\right) ,\left( 2,3\right) ,\left( 4,3\right) $
and $\Delta_{a,b}^{\ell} <0 $ for almost all $\ell$ otherwise.
\item
Let $a \equiv 1,2 \pmod{3}$ and $b \equiv 1,2 \pmod{3}$
but not both $\equiv 2\pmod{3}$. \\Then $\Delta_{a,b}^{\ell} <0$ for almost all $\ell$.
\item
Let $a \equiv 2 \pmod{3}$ and $b \equiv 2 \pmod{3}$.\\
Then $\Delta_{a,b}^{\ell} >0$ for almost all $\ell$ if $a,b >2$ and $\Delta_{a,b}^{\ell} <0$ for almost all $\ell$ otherwise ($a=2$ or $b=2$). 
\end{itemize}
\end{theorem}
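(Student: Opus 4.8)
The plan is to compare $\Delta_{a,b}^{\ell}$ with the product of the asymptotic main terms of (\ref{eq:asymp}). Writing $n=3m+r$ with $r\in\{0,1,2\}$, one has $B_{\ell}(n)=C(n)\,D(n)^{\ell-1}$, where $C(n)$ is a constant (a reciprocal factorial, sometimes with a factor $\tfrac76$) and $D(n)$ depends only on $n$; the key point is that $D(a)D(b)/D(a+b)$ depends only on the residues $a,b\bmod 3$, and a short computation gives
\[
\frac{D(a)D(b)}{D(a+b)}=\tfrac89\ \text{ if }\ (a\bmod 3,\,b\bmod 3)\in\{(1,1),(1,2),(2,1)\},\qquad \frac{D(a)D(b)}{D(a+b)}=1\ \text{ otherwise.}
\]
Since $N_{\ell}\sim B_{\ell}$ by (\ref{eq:asymp}), as $\ell\to\infty$ the quotient $N_{\ell}(a)N_{\ell}(b)/N_{\ell}(a+b)$ is asymptotic to $\frac{C(a)C(b)}{C(a+b)}\bigl(\frac{D(a)D(b)}{D(a+b)}\bigr)^{\ell-1}$; hence the sign of $\Delta_{a,b}^{\ell}$ for almost all $\ell$ is decided first by whether the $D$--ratio is $<1$, $>1$, or $=1$, and in the last case by whether $\frac{C(a)C(b)}{C(a+b)}$ is $<1$, $>1$, or $=1$.

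This settles almost everything. If $a\equiv b\not\equiv 0\pmod 3$ and not both $\equiv 2$, the $D$--ratio is $\tfrac89<1$, so $\Delta_{a,b}^{\ell}<0$ for almost all $\ell$: this is the second bullet. In the remaining cases the $D$--ratio equals $1$, and one computes $\frac{C(a)C(b)}{C(a+b)}$ explicitly: when $a\equiv b\equiv 2\pmod 3$ it equals $\tfrac67$ if $\min(a,b)=2$ and is $\ge\tfrac{12}{7}$ otherwise; when at least one of $a,b$ is $\equiv 0\pmod 3$ it is a single binomial coefficient, which equals $1$ exactly when $\{a,b\}=\{3m,2\}$ or $\{3m,4\}$ and is $\ge 2$ otherwise. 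Since $\tfrac67<1<\tfrac{12}{7}$ and a binomial coefficient is never $\tfrac76$, this proves the third bullet completely, and it proves the first bullet except in the borderline configurations $\{a,b\}=\{3m,2\}$ and $\{a,b\}=\{3m,4\}$ with $m\ge1$ (the exceptions $(3,2),(3,4)$ and their transposes being the case $m=1$), where (\ref{eq:asymp}) only gives $B_{\ell}(a)B_{\ell}(b)=B_{\ell}(a+b)$ identically in $\ell$.

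For these borderline pairs I would use the exact expansion $N_{\ell}(n)=\sum_{\lambda\vdash n}\prod_{d}\frac1{m_d(\lambda)!}\bigl(\tfrac{g_{\ell}(d)}{d}\bigr)^{m_d(\lambda)}$ (equivalent to the recursion (\ref{eq:Wohlfahrt}); here $m_d(\lambda)$ is the multiplicity of the part $d$ in $\lambda$). Multiplying the expansions of $N_{\ell}(a)$ and $N_{\ell}(b)$ and collecting terms according to their union partition $\mu\vdash a+b$ yields
\[
\Delta_{a,b}^{\ell}=\sum_{\mu\vdash a+b}\bigl(w(\mu)-1\bigr)\prod_{d}\frac1{m_d(\mu)!}\Bigl(\frac{g_{\ell}(d)}{d}\Bigr)^{m_d(\mu)},\qquad w(\mu):=\sum_{\nu\vdash\min(a,b),\ \nu\subseteq\mu}\ \prod_{d}\binom{m_d(\mu)}{m_d(\nu)}.
\]
The crux is that every maximal--product partition of $a+b$ satisfies $w(\mu)=1$, hence drops out: the maxima are $3^{m}2^{1}$ for $\{a,b\}=\{3m,2\}$, and $3^{m}4^{1}$ or $3^{m}2^{2}$ for $\{a,b\}=\{3m,4\}$, and a direct check gives $w=1$ on each. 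Therefore $\Delta_{a,b}^{\ell}$ is driven by the partitions of second--largest product. For $m\ge2$ this runner--up product equals $\tfrac89$ of the maximum and is attained by exactly three partitions, obtained from $3^{m-2}$ by splitting the residual weight ($8$ when $b=2$, $10$ when $b=4$) into parts of size $2$ and $4$; summing $(w(\mu)-1)[\mu]$ over these three, with $\frac{g_{\ell}(d)}{d}\sim\frac{d^{\ell-1}}{d-1}$ for primes $d\in\{2,3,5\}$ and $\frac{g_{\ell}(4)}{4}\sim\frac{4^{\ell}}{6}$, produces a strictly positive leading coefficient, so $\Delta_{a,b}^{\ell}>0$ for almost all $\ell$. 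For $m=1$ the runner--up is the single partition $\{5\}$ (if $b=2$) or $\{5,2\}$ (if $b=4$), each with $w(\mu)-1=-1$, whose contribution dominates and is negative, giving $\Delta_{3,2}^{\ell},\Delta_{3,4}^{\ell}<0$ for almost all $\ell$; here one in fact obtains closed forms, e.g.\ $\Delta_{3,2}^{\ell}=2^{\ell-2}-\tfrac{5^{\ell}}{20}-\tfrac{4^{\ell}}{24}-\tfrac1{12}$, negative for every $\ell\ge2$.

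The main obstacle is the borderline analysis of the previous paragraph. One must establish rigorously that the partitions listed there are exactly the partitions of $a+b$ of largest and second--largest product, uniformly in $m$; verify $w(\mu)=1$ on the maxima and compute $w(\mu)$ on the runner--ups; and---most delicately---bound every remaining contribution, namely the partitions of strictly smaller product and the lower--order corrections to $g_{\ell}(d)/d$ inside the runner--up partitions, by a quantitative form of $g_{\ell}(d)=d^{\ell-1}+O\bigl((d/p)^{\ell}\bigr)$ (with $p$ the least prime factor of $d$), so that the constant--sign leading coefficient actually fixes the sign of $\Delta_{a,b}^{\ell}$ for all large $\ell$, with an explicit threshold. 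The residue bookkeeping of the first two paragraphs is routine; the quantitative error control in the borderline cases is where the real work lies.
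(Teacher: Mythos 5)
Your first two paragraphs reproduce the paper's own argument: the quotient $B_{\ell}(a)B_{\ell}(b)/B_{\ell}(a+b)$ factors into a residue-dependent power $(D(a)D(b)/D(a+b))^{\ell-1}$ and a constant $C(a)C(b)/C(a+b)$, and tracking these two factors over the six residue pairs gives the sign in all non-borderline cases. Your $D$-ratio and $C$-ratio computations agree with Section~3 of the paper (the binomial coefficients $\binom{(a+b)/3}{a/3}$, $\binom{(a+b-4)/3}{a/3}$, $\binom{(a+b-2)/3}{a/3}$, and $\frac{6}{7}\binom{(a+b-4)/3}{(a-2)/3}$, and the $(8/9)^{\ell-1}$ decay for the residue combinations $(1,1),(1,2),(2,1)$).

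Where you genuinely diverge from the paper is the borderline family $\{a,b\}=\{3m,2\}$ and $\{3m,4\}$ with $a\equiv 0\pmod 3$. The paper attacks these via a second-order asymptotic expansion: Lemma~\ref{zweite} identifies the subleading term $D_{\ell}(n)$ of $N_{\ell}(n)=B_{\ell}(n)+D_{\ell}(n)+o(\cdot)$, and the proof directly substitutes these expansions for $N_{\ell}(a)$, $N_{\ell}(b)$, $N_{\ell}(a+b)$, observes the leading $B_{\ell}$-terms cancel, and reads off the sign of the surviving coefficient (the constants $\tfrac{17}{72}$ and $\tfrac{227}{360}$ that appear in Section~4). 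You instead propose a single combinatorial identity $\Delta_{a,b}^{\ell}=\sum_{\mu\vdash a+b}(w(\mu)-1)\prod_d\frac{1}{m_d(\mu)!}\bigl(g_{\ell}(d)/d\bigr)^{m_d(\mu)}$ with the weight $w(\mu)$ counting decompositions of $\mu$ into sub-partitions of $a$ and $b$; the cancellation of the leading term then becomes the transparent combinatorial statement $w(\mu)=1$ on the maximal-product partitions $3^m2$ and $3^m4,3^m2^2$. I checked that your runner-up computation reproduces the paper's constants: the three partitions $3^{m-2}4^2,\ 3^{m-2}4\,2^2,\ 3^{m-2}2^4$ (resp.\ $3^{m-2}4^22,\ 3^{m-2}4\,2^3,\ 3^{m-2}2^5$) carry weights $w-1=-1,+1,+3$ (resp.\ $+1,+3,+9$), and summing $(w(\mu)-1)/\prod m_d!$ against the leading constants of $g_{\ell}(d)/d$ gives $\tfrac{17}{1152}=\tfrac{17}{72}\cdot\tfrac1{16}$ and $\tfrac{227}{11520}=\tfrac{227}{360}\cdot\tfrac1{32}$, matching the paper exactly. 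Your closed form $\Delta_{3,2}^{\ell}=2^{\ell-2}-\tfrac14\,5^{\ell-1}-\tfrac16\,4^{\ell-1}-\tfrac1{12}$ also agrees with Lemma~\ref{klein}. Both approaches are ultimately the same partition-by-partition expansion of (\ref{explicit}) ordered by product of parts, but your $w(\mu)$ formulation organizes the whole of $\Delta_{a,b}^{\ell}$ as one sum and makes the vanishing at leading order a one-line combinatorial check rather than a cancellation between three separately expanded series; the paper's route has the advantage of reusing Lemma~\ref{zweite}, which it needs anyway for the explicit thresholds of Theorems~\ref{th:explicit 1}--\ref{th:explicit 2}. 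For the qualitative ``almost all $\ell$'' statement you are proving here, the quantitative error control you flag as the remaining work is automatic: once the partitions of maximal and of second-largest product are identified, the finitely many remaining partitions each contribute $O(\rho^{\ell})$ with $\rho$ strictly smaller than the runner-up product, so the sign of the runner-up coefficient dictates the sign of $\Delta_{a,b}^{\ell}$ for all large $\ell$ with no further estimates.
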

With some extra effort and extension
of (\ref{eq:asymp}), we can make the bounds effective.
Let $(a,b)$ be fixed, where $ 2 \leq a
\leq b \leq 10$.
Then we have recorded for every $\ell \geq 2$
in Table {\ref{tab:explicit} if
\begin{equation*}
\Delta_{a,b}^{\ell}>0 
\,\,\text{ or } \,\,\Delta_{a,b}^{\ell}<0
\end{equation*}
Note that $\Delta_{a,b}^{1}=0$ for all $a,b$.
$+
_m$ indicates that 
$\Delta_{a,b}^{\ell}>0$ for $\ell\ge m$ and $\Delta_{a,b}^{\ell}<0$ for $2\le \ell \le m-1$. Likewise,
$-_{m}$ indicates $\Delta_{a,b}^{\ell}<0$ for $\ell\ge m$ and $\Delta_{a,b}^{\ell}>0$ for $2\le m-1$.
Special cases: $+_{3}^{\ast }$ indicates,
$\Delta _{2,6}^{2}=0$, and
$\Delta _{2,6}^{\ell }>0$ for $\ell \geq 3$
and $-_{19}^{*
}$ indicates $\Delta_{2,7}^2=0$, $\Delta_{2,7}^{\ell}>0$ for $3 \leq \ell \leq 18$, and $\Delta_{2,7}^{\ell} <0$ for $\ell \geq 19$.
Moreover, $-_{3}^{\ast }$ indicates
$\Delta _{3,4}^{2}=0$ and
$\Delta _{3,4}^{\ell }<0$ for $\ell \geq 3
$
and $+_8^{*
}$ indicates 
$\Delta_{3,7}^{\ell}<0$ for 
$4 \leq \ell \leq 7$ and
$\Delta_{3,7}^{\ell}>0$ otherwise.

\begin{table}[H]
\[
\begin{array}{c|c|c|c|c|c|c|c|c|c|}
\hline
a\backslash b & 2 & 3 & 4 &5&6&7&8&9&10\\
\hline 
2 & -_2&  -_2& -_2&-_{2}&+_{3}^{\ast }&-_{19}^{*
}&-_{23}&+_{2}&-_{26}\\ \hline
3 &  &  +_{18}& -_{3}^{\ast } &+_{15}&+_{2}&+_{8}^{*
}&+_{2}&+_{2}&+_{2}\\ \hline
4 & & & -_{16}&-_{19}&+_{2}&-_{26}&-_{29}&+_{2}&-_{33}\\ \hline
5&&&&+_{2}&+_{2}&-_{38}&+_{2}&+_{2}&-_{47}\\ \hline
6&&&&&+_{2}&+_{2}&+_{2}&+_{2}&+_{2}\\ \hline
7&&&&&&-_{41}&-_{47}&+_{2}&-_{49}\\ \hline
8&&&&&&&+_{2}&+_{2}&-_{59}\\ \hline
9&&&&&&&&+_{2}&+_{2}\\ \hline
10&&&&&&&&&-_{59}\\ \hline
\end{array}
\]
\caption{\label{tab:explicit}$\pm_{m}$
positive or 
negative value of $\Delta_{a,b}^{\ell }$ for $\ell \geq m$.}
\end{table}

For example the exception 
$(2,2)$ 
in Bessenrodt and Ono's work related to 
$p(n) = N_{2}(n)$ is an exception for all $\ell \geq 2$ 
since $\Delta_{2,2}^{\ell} <0$ for all $\ell \geq 2$. But
$(3,3)$ for $\ell =2$
is only an exception until $\ell =17$: 
$\Delta_{3,3}^{\ell} <0$ for $2 \leq \ell \leq 17$ and
$\Delta_{3,3}^{\ell} >0$ for $\ell \geq 18$.

\begin{theorem}\label{th:Main}
Let $2 \leq a\leq 
b \leq 10$. Then Table \ref{tab:explicit} records all $\ell \geq m$ for
which $\Delta_{a,b}^{\ell}<0$ and $\Delta_{a,b}^{\ell}>0$.
\end{theorem}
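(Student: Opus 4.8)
The plan is to verify Table \ref{tab:explicit} entry by entry, splitting each pair $(a,b)$ with $2\le a\le b\le 10$ into a finite "small $\ell$" range that is checked by direct computation and a "large $\ell$" range that is handled by a rigorous effective version of the asymptotic \eqref{eq:asymp}. First I would record, for each of the values $n\in\{2,\dots,20\}$ that arise as $a$, $b$, or $a+b$, an explicit formula or recursion for $N_\ell(n)$ as a function of $\ell$. Using the Euler product / the recursion \eqref{eq:Wohlfahrt} with $G=\Z^\ell$, or equivalently the species-type decomposition behind \eqref{eq:asymp}, each $N_\ell(n)$ is a fixed finite $\Z_{\ge0}$-linear combination of terms of the shape $\tfrac{c}{2^{j}j!}\,d^{\ell}$ for finitely many pairs $(d,j)$; the dominant term is exactly $B_\ell(n)$ in \eqref{eq:asymp}, and one reads off the next-largest base $d'<d$. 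This gives, for each relevant $n$, an inequality of the form $B_\ell(n)\le N_\ell(n)\le B_\ell(n)\,(1+R(n)\,r(n)^{\ell})$ with an explicit constant $R(n)$ and ratio $r(n)=d'/d<1$, valid for all $\ell\ge2$ (or from some small explicit $\ell_0(n)$ on).

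Next I would turn the sign question into a comparison of exponential-in-$\ell$ quantities. Writing $\Delta_{a,b}^\ell = N_\ell(a)N_\ell(b)-N_\ell(a+b)$, the leading asymptotics of the two summands are $B_\ell(a)B_\ell(b)$ and $B_\ell(a+b)$, each of the form (constant)$\cdot$(base)$^{\ell-1}$ up to the factorial prefactors. Comparing the two bases determines the eventual sign: if the product base exceeds the $(a+b)$-base the sign is eventually $+$, if it is smaller the sign is eventually $-$, and the borderline equal-base cases (which is where the factorial prefactors and the special starred entries live, e.g. $(3,7)$, $(2,6)$) must be examined separately. For the "large $\ell$" tail I would make this quantitative: using the two-sided bounds from the previous paragraph, produce an explicit $\ell^{*}(a,b)$ beyond which the sign of $\Delta_{a,b}^\ell$ equals the sign of the dominant difference, by bounding the error terms (which decay like $r^{\ell}$ relative to the main term) against the gap between the competing main terms. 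This is a routine but careful estimate: one isolates the largest and second-largest exponential contributions to $N_\ell(a)N_\ell(b)$ and to $N_\ell(a+b)$ and checks that the claimed inequality survives once $(\text{gap})>(\text{total error})$.

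Then, for the "small $\ell$" range $2\le \ell<\ell^{*}(a,b)$, I would simply compute $N_\ell(a),N_\ell(b),N_\ell(a+b)$ exactly — these are finitely many integers, obtained from \eqref{eq:Wohlfahrt} or the known Euler product — and read off the sign of $\Delta_{a,b}^\ell$, thereby pinning down the crossover index $m$ printed in Table \ref{tab:explicit} and confirming the starred behaviour in the exceptional cells. Combining the exact small-$\ell$ data with the tail bound gives the full sign pattern for every $\ell\ge2$ and every $(a,b)$ in the table, which is exactly the assertion of Theorem \ref{th:Main}. The qualitative eventual-sign statements of Theorem \ref{th:almost} are a byproduct of the base comparison in the second paragraph, restricted to $n\le20$.

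The main obstacle I expect is the borderline cases where the two competing exponential bases coincide, so that the eventual sign is decided by the slower-growing factorial and constant prefactors rather than by a clean exponential gap; there the naive error bound $r^{\ell}$ is not automatically small compared to the (now subexponential) gap, and one must either push the asymptotic expansion \eqref{eq:asymp} to the next order — keeping the second-largest term explicitly, as the phrase "extension of \eqref{eq:asymp}" in the text hints — or enlarge the explicit check range $\ell^{*}(a,b)$ substantially and verify by direct computation that the sign has already stabilized before the refined bound takes over. A secondary nuisance is purely bookkeeping: making sure the constants $R(n)$, $r(n)$, and $\ell^{*}(a,b)$ are genuinely explicit and that the finitely many exact evaluations of $N_\ell(n)$ are carried out to sufficient precision to distinguish a small positive $\Delta$ from a small negative one near the crossover.
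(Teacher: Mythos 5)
Your proposal matches the paper's strategy essentially step for step: express $N_\ell(n)$ for the relevant $n$ as explicit finite combinations of exponentials in $\ell$ (this is exactly what Lemmas \ref{klein}, \ref{zweite}, and \ref{dritte} do), compare the dominant bases to determine the eventual sign, push to the subleading and next-to-subleading terms in the borderline cases $a\equiv 0\pmod 3$ with $b\in\{2,4\}$ where the leading bases coincide and the binomial prefactor degenerates to $1$, produce an explicit crossover threshold $\ell^{*}(a,b)$, and verify the finite remaining range $2\le\ell<\ell^{*}(a,b)$ by direct computation (PARI/GP in the paper). One small inaccuracy worth flagging is your assertion that the coefficients in the exponential expansion lie in $\Z_{\ge0}$ — they do not (e.g.\ $N_\ell(3)=\tfrac12 3^{\ell-1}+2^{\ell-1}-\tfrac12$ and $N_\ell(4)$ both have negative terms) — but this does not affect the validity of the overall method.
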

We split our general result into two parts.

\begin{theorem}\label{th:explicit 1}
We 
consider $b=2$ or $b=4$.
\begin{enumerate}
\item  Let $b=2$.

\begin{enumerate}
\item  Let $a >9 $ and $a \equiv 0 \pmod{3}$. Then $\Delta_{a,b}^{\ell} >0$ for 
 $$\ell \geq 1+\log _{9/8}\left( \frac{72}{17}\right)
+\frac{a-6}{3}\log _{9/8}\left( \frac{2}{3}\left( a-6\right)
\right) +\left( a+2\right) \log _{9/8}\left( 2\right). $$

\item  Let $a >5$ and $a \equiv 2 \pmod{3}$. Then $\Delta_{a,b}^{\ell}<0$ for 
 \begin{equation*}
 \ell \geq 1+\frac{a-2}{3}\log _{9/8}\left(
\frac{2}{3}\left( a-2\right)
\right) +a\log _{9/8}\left( a\right) .
 \end{equation*}

\end{enumerate}

\item  Let $b=4$. Let $ a>9$ and $a \equiv 0 \pmod{3}$.
Then $\Delta_{a,b}^{\ell} <0$ for
 \begin{equation*}
\ell \geq 
 1+\log _{9/8}\left( \frac{360}{227}\right)
+\frac{a-6}{3}\log _{9/8}\left( \frac{2}{3}\left( a-6\right)
\right) +\left( a+4\right) \log _{9/8}\left( 2\right) .
 \end{equation*}
\end{enumerate}
\end{theorem}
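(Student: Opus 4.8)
The plan is to upgrade the asymptotic \eqref{eq:asymp} to effective two--sided bounds for $N_\ell(n)$ and then feed these into $\Delta_{a,b}^{\ell}$. By the Euler product of Bryan--Fulman and White --- equivalently, the exponential formula applied to the decomposition of a $\Z^{\ell}$--set into orbits --- one has $\sum_{n\ge 0}N_\ell(n)x^n=\exp\!\bigl(\sum_{d\ge 1}\tfrac{g_\ell(d)}{d}x^{d}\bigr)$, hence
\[
N_\ell(n)=\sum_{\lambda\vdash n}\ \prod_{d\ge 1}\frac{g_\ell(d)^{m_d(\lambda)}}{m_d(\lambda)!\,d^{\,m_d(\lambda)}},
\]
where $m_d(\lambda)$ counts the parts of $\lambda$ equal to $d$ and $g_\ell(d)$, the number of index--$d$ sublattices of $\Z^{\ell}$, equals $\sum_{n_0\cdots n_{\ell-1}=d}\prod_i n_i^{\,i}$. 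The first ingredient, the extension of \eqref{eq:asymp} alluded to in the text, is the elementary estimate
\[
d^{\ell-1}\ \le\ g_\ell(d)\ \le\ C_d\,d^{\ell}\qquad (\ell\ge 1),
\]
with $C_d$ explicit ($C_2=1$, $C_3=\tfrac12$, $C_4=\tfrac23$, and $C_d\le 1$ for all $d$): the lower bound is the single term $n_{\ell-1}=d$, and for the upper bound one checks that $g_\ell(d)/d^{\ell}$ increases to its limit $C_d$.

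Substituting these, the contribution of a partition $\lambda\vdash n$ grows like $\pi(\lambda)^{\ell-1}$, $\pi(\lambda):=\prod_i\lambda_i$. For $n\ge 8$ the product $\pi(\lambda)$ is maximised exactly by the partition into $3$'s (plus one $2$, resp.\ one $4$, when $n\equiv 2,1\pmod 3$), which reproduces the main term $B_\ell(n)$ of \eqref{eq:asymp}; its next largest value is precisely $\tfrac89$ of the maximum (obtained by swapping two $3$'s for $2+2+2$ or for $2+4$, etc.). Together with the bounds above this yields, for each such $n$ in the relevant residue class, an effective expansion
\[
N_\ell(n)=B_\ell(n)\bigl(1+\varepsilon_\ell(n)\bigr),\qquad |\varepsilon_\ell(n)|\ \le\ \Phi(n)\,(8/9)^{\ell},
\]
with $\Phi(n)$ explicit in $n$, the factorial $\floor{n/3}!$, and the $C_d$; the small arguments needed are handled by the exact identities $N_\ell(2)=2^{\ell-1}$ and $N_\ell(4)=B_\ell(4)+\tfrac16 3^{\ell}-\tfrac14 2^{\ell}-\tfrac16$.

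Now substitute, writing $R_\ell(n):=N_\ell(n)-B_\ell(n)$ (so $R_\ell(2)=0$). A direct computation of main terms gives $B_\ell(a)\,2^{\ell-1}=B_\ell(a+2)$ for $a\equiv 0\pmod 3$, $B_\ell(a)\,2^{\ell-1}=\tfrac67 B_\ell(a+2)$ for $a\equiv 2\pmod 3$, and $B_\ell(a)\,B_\ell(4)=B_\ell(a+4)$ for $a\equiv 0\pmod 3$. Hence:
\begin{itemize}
\item ($b=2$, $a\equiv 0\pmod 3$) $\Delta_{a,2}^{\ell}=2^{\ell-1}R_\ell(a)-R_\ell(a+2)$. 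Bounding $R_\ell(a)$ below by the two second--order partitions of $a$ and $R_\ell(a+2)$ above by the expansion, the coefficient of $(8/9)^{\ell}$ on the positive side strictly dominates the one on the negative side (for $a>9$), and solving $2^{\ell-1}R_\ell(a)>R_\ell(a+2)$ for $\ell$, with a crude Stirling bound on the factorials, produces the displayed threshold.
\item ($b=2$, $a\equiv 2\pmod 3$) $\Delta_{a,2}^{\ell}=-\tfrac17 B_\ell(a+2)+\bigl(2^{\ell-1}R_\ell(a)-R_\ell(a+2)\bigr)$, and the bracket is $O\bigl((8/9)^{\ell}B_\ell(a+2)\bigr)$ with explicit constant; requiring it to drop below $\tfrac17 B_\ell(a+2)$ gives the stated bound.
\item ($b=4$, $a\equiv 0\pmod 3$) $\Delta_{a,4}^{\ell}=B_\ell(4)R_\ell(a)+N_\ell(a)R_\ell(4)-R_\ell(a+4)$. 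Here $B_\ell(4)R_\ell(a)$ and $-R_\ell(a+4)$ dominate exponentially (both of size $\asymp(8/9)^{\ell}B_\ell(4)B_\ell(a)$, while $N_\ell(a)R_\ell(4)$ is of strictly smaller order since $32/9>3$), and the second--order coefficient attached to $a+4$ exceeds the one attached to $a$, so the negative term wins past the displayed bound.
\end{itemize}
In each case a final bookkeeping step reorganises $\Phi$ and the factorials into precisely the expression recorded in the statement.

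The main obstacle is the expansion of $N_\ell(n)$: the error term must be \emph{simultaneously} uniform in $\ell\ge 2$, sharp in its base (it has to be $8/9$, not anything larger, to survive the exact cancellation $B_\ell(a)\,2^{\ell-1}=B_\ell(a+2)$ in the first and third cases, where the sign of $\Delta_{a,b}^{\ell}$ for large $\ell$ is decided purely by a comparison of second--order coefficients), and clean in its $n$--dependence. Once that is available the middle case is straightforward, the first and third are the delicate ones, and converting the resulting coefficient inequalities into logarithmic thresholds is routine.
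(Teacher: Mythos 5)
Your proposal and the paper's proof share the same architecture: effective bounds on $N_\ell(n)$ derived from the partition-indexed sum and the estimates $d^{\ell-1}\le g_\ell(d)\le d^\ell$, exact cancellation of leading terms $B_\ell(a)\,2^{\ell-1}=B_\ell(a+2)$ and $B_\ell(a)\,B_\ell(4)=B_\ell(a+4)$ for $a\equiv 0\pmod 3$, and a comparison at the next order with an error controlled by the $\tfrac{8}{9}$ ratio. However, the central displayed estimate you offer, $N_\ell(n)=B_\ell(n)\bigl(1+\varepsilon_\ell(n)\bigr)$ with $|\varepsilon_\ell(n)|\le \Phi(n)(8/9)^\ell$, is one term short of what your first and third bullets need. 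After cancellation, $\Delta_{a,2}^{\ell}=B_\ell(a+2)\bigl(\varepsilon_\ell(a)-\varepsilon_\ell(a+2)\bigr)$, and a two-sided $O$-bound on each $\varepsilon_\ell$ gives no sign; in particular bounding ``$R_\ell(a+2)$ above by the expansion'' yields an upper bound of the \emph{same} exponential order $(8/9)^{\ell}B_\ell(a+2)$ as the lower bound $2^{\ell-1}D_\ell(a)$ you use on the positive side, so the comparison does not close.

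What the paper does --- and what your final paragraph gestures at but your displayed step does not deliver --- is extract the subleading term $D_\ell(n)$ with its \emph{explicit} coefficient ($\tfrac{5}{6}$, $\tfrac{41}{120}$, $\tfrac{43}{72}$ depending on $n\bmod 3$; its Lemma~3.1) and then bound only the next-to-subleading remainder by $p(n)\,M_3(n)^{\ell-1}$, where $M_3(n)$ is the third-largest product base (its Lemma~4.1), with $M_3(n)/M_2(n)=\tfrac{8}{9}$ again. This makes $D_\ell(a)\,2^{\ell-1}-D_\ell(a+2)$ a single term with explicit positive coefficient $\tfrac{17}{72}$ (respectively $\tfrac{227}{360}$ for $b=4$), which then dominates the third-order error once $\ell$ exceeds the stated threshold. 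So your strategy is the right one and essentially coincides with the paper's, but the effective expansion must be refined to the two-term form $N_\ell(n)=B_\ell(n)+D_\ell(n)+O\bigl(p(n)M_3(n)^{\ell-1}\bigr)$ with explicit second-order coefficients before the coefficient comparison in your bullets is actually available.
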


\begin{theorem} 
\label{th:explicit 2}
Let $b \geq 2$.

\begin{enumerate}
\item  Let $b \equiv a \equiv 0 \pmod{3}$. Then $\Delta_{a,b}^{\ell} >0$ for 
\begin{equation*}
\ell >1+\frac{a+b}{3}\log _{9/8}\left( 4\frac{a+b}{3}\right) .
\end{equation*}

\item  Let $b \equiv 1 \pmod{3}$.

\begin{enumerate}
\item  Let $b>4$ and $a\equiv 0 \pmod{3}$. Then $\Delta_{a,b}^{\ell} >0$ for
\begin{equation*}
\ell \geq \log _{9/8}\left( \frac{7}{6}\right) +\frac{a+b-2}{3}\log _{9/8}\left( 4\right) +\frac{a+b-4}{3}\log _{9/8}\left( \frac{a+b-4}{3}\right). 
\end{equation*}

\item  Let $b>1$ and $ a \equiv 1 \pmod{3}$. Then $\Delta_{a,b}^{\ell} <0$ for 
\begin{equation*}
\ell \geq
-6+9 (a+b)+\frac{(a+b)/2+2}{3}\log _{9/8}\left( \frac{(a+b)-2}{3}\right).
\end{equation*}
\end{enumerate}

\item  Let $b \equiv 2 \pmod{3}$.

\begin{enumerate}
\item  Let $b>2$ and $a \equiv 0 \pmod{3}$. Then $\Delta_{a,b}^{\ell} >0$ for
\begin{equation*}
\ell \geq 1+\frac{a+b-1}{3}\log _{9/8}\left( 4\right) +\frac{a+b-2}{3}\log _{9/8}\left( \frac{a+b-2}{3}\right).
\end{equation*}

\item  Let $a \equiv 1 \pmod{3}$ and $a >1$. Then $\Delta_{a,b}^{\ell} <0$ for
\begin{equation*}
\ell \geq 5+9
\left( a+b\right) +
\frac{(a+b)/2+4}{3}
\log _{9/8}\left( \frac{a+b}{3}\right).
\end{equation*}

\item  Let $a,b >2$ and $a \equiv 2 \pmod{3}$. Then
$\Delta_{a,b}^{\ell}>0$ for 
\begin{equation*}
\ell \geq
1+\log _{9/8}\left( \frac{7}{6}\right) +\left( \frac{a+b-4}{3}+a+b\right) \log _{9/8}\left( 2\right) .
\end{equation*}
\end{enumerate}
\end{enumerate}
\end{theorem}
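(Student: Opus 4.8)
\section*{Proof proposal}

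The plan is to derive everything from an \emph{effective} version of the asymptotic $(\ref{eq:asymp})$, and then to run a short case analysis on $(a,b)\bmod 3$. The starting point is that Wohlfahrt's recursion $(\ref{eq:Wohlfahrt})$ is equivalent, via $qF'(q)=\big(\sum_k g_\ell(k)q^k\big)F(q)$ with $F(q)=\sum_n N_\ell(n)q^n$, to the identity $\sum_{n\ge 0}N_\ell(n)q^n=\exp\!\big(\sum_{k\ge 1}\tfrac{g_\ell(k)}{k}q^k\big)$, hence $N_\ell(n)=\sum_{\lambda\vdash n}\prod_{j\ge 1}\tfrac1{m_j(\lambda)!}\big(\tfrac{g_\ell(j)}{j}\big)^{m_j(\lambda)}$, where $m_j(\lambda)$ is the multiplicity of $j$ in $\lambda$. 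For fixed $j$, the Gaussian-binomial closed form $g_\ell(p^a)=\prod_{i=1}^{a}\tfrac{p^{\ell-1+i}-1}{p^i-1}$ together with multiplicativity shows $g_\ell(j)=\sum_{d}\gamma_{j,d}\,d^{\ell}$ with \emph{finitely many constant} rationals $\gamma_{j,d}$ and $\gamma_{j,j}>0$. Substituting, each $N_\ell(n)$ is a fixed finite combination $\sum_{\mu}c_{n,\mu}\mu^{\ell}$, the $\mu$'s being exactly the products of parts of partitions of $n$. The top value is $P(n):=\max_{\lambda\vdash n}\prod(\text{parts of }\lambda)$ and its term is precisely $B_\ell(n)$; the crucial combinatorial fact is that the \emph{next} value attained by a partition product is $\tfrac89 P(n)$ (no partition product lies strictly between $\tfrac89 P(n)$ and $P(n)$ once $n$ is not too small), which is the source of the base $9/8$ in every bound in the statement.

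Step 1 (effective $(\ref{eq:asymp})$). I would bound the subdominant part by $\big|N_\ell(n)-B_\ell(n)\big|\le\big(\tfrac89 P(n)\big)^{\ell}\sum_{\mu<P(n)}|c_{n,\mu}|$, and estimate $\sum_\mu|c_{n,\mu}|$ crudely but explicitly by passing the triangle inequality through $\sum_{\lambda\vdash n}\prod_j\tfrac1{m_j!}(\cdots)^{m_j}$. This yields an explicit $\kappa_n>0$ with $N_\ell(n)=B_\ell(n)\big(1+\eta_\ell(n)\big)$, $|\eta_\ell(n)|\le \kappa_n(8/9)^{\ell}$, valid for all $\ell\ge 2$. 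Along the way one records, by residue class, the explicit data $P(n)$ and $D(n)$ with $B_\ell(n)=P(n)^{\ell-1}/D(n)$: namely $P(n)=3^{n/3},\ 4\cdot 3^{(n-4)/3},\ 2\cdot 3^{(n-2)/3}$ and $D(n)=2^{n/3}(n/3)!,\ \tfrac67 2^{(n-4)/3}((n-4)/3)!,\ 2^{(n-2)/3}((n-2)/3)!$ for $n\equiv 0,1,2\pmod 3$ respectively (the $\tfrac76$ reflecting the two optimal partitions $(4,3,\dots)$ and $(2,2,3,\dots)$ when $n\equiv1$).

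Step 2 (case analysis). A direct computation with $P(n)$ gives $\tfrac{P(a)P(b)}{P(a+b)}=1$ in the four cases where the theorem asserts $\Delta_{a,b}^{\ell}>0$ and $\tfrac{P(a)P(b)}{P(a+b)}=\tfrac89$ in the two cases where it asserts $\Delta_{a,b}^{\ell}<0$, so that $\tfrac{B_\ell(a)B_\ell(b)}{B_\ell(a+b)}=\tfrac{D(a+b)}{D(a)D(b)}\big(\tfrac89\big)^{\ell-1}$. In the positive cases $\tfrac{D(a+b)}{D(a)D(b)}$ collapses to a binomial coefficient such as $\binom{(a+b)/3}{a/3}$, $\binom{(a+b-4)/3}{a/3}$, $\binom{(a+b-2)/3}{a/3}$, or $\tfrac67\binom{(a+b-4)/3}{(a-2)/3}$, which the hypotheses $b>4$, $b>2$, or $a,b>2$ force to be $\ge 2$ (rather than $=1$); writing $\Delta_{a,b}^{\ell}=N_\ell(a)N_\ell(b)-N_\ell(a+b)$, inserting $N_\ell=B_\ell(1+\eta)$ and factoring out $B_\ell(a)B_\ell(b)$ leaves $\tfrac{D(a+b)}{D(a)D(b)}-1\ge 1$ minus an error $O\!\big(\kappa(8/9)^{\ell}\big)$, which is positive once $\ell$ exceeds $\log_{9/8}$ of an explicit product of the $\kappa$'s and $D$'s; simplifying with the above $D(n)$ produces the stated thresholds (the $\log_{9/8}4$ from $2^{(\cdot)/3}$, the $\log_{9/8}(7/6)$, and the $\log_{9/8}\!\big(\tfrac{a+b}{3}\big)$ term from the factorial). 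In the negative cases $\tfrac{B_\ell(a)B_\ell(b)}{B_\ell(a+b)}=\tfrac{D(a+b)}{D(a)D(b)}(8/9)^{\ell-1}\to 0$, so $\Delta_{a,b}^{\ell}=B_\ell(a+b)\big(\tfrac{D(a+b)}{D(a)D(b)}(8/9)^{\ell-1}(1+O(\eta))-1+O(\eta)\big)<0$ as soon as the first term drops below $1$, i.e.\ once $\ell-1>\log_{9/8}\!\big(\tfrac{D(a+b)}{D(a)D(b)}\big)$ plus the slack needed to absorb the $\eta$-terms; bounding $D(a+b)=2^{(a+b-2)/3}((a+b-2)/3)!$ and absorbing the $\eta$-corrections by the deliberately wasteful linear term $9(a+b)$ gives the displayed bounds in (2)(ii) and (3)(ii).

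The main obstacle is Step 1 together with the ``spectral-gap'' input: one must verify that in every residue class the second-largest partition product is exactly $\tfrac89 P(n)$ — this is what makes all thresholds come out as $\log_{9/8}$ of explicit data — and that this persists for the relevant $a,b$; for the genuinely small values (where the gap degrades, e.g.\ to $\tfrac34$ or $\tfrac56$) the sign of $\Delta_{a,b}^{\ell}$ may be the opposite one for all $\ell$, which is precisely why those pairs are excluded and are instead governed by Theorem~\ref{th:almost}. A secondary difficulty is that $\tfrac{D(a+b)}{D(a)D(b)}$ can equal exactly $2$ in the positive cases, so the positive margin is thin and the bound $\kappa_n(8/9)^{\ell}$ must be made honestly small; keeping the final inequalities in the clean $\log_{9/8}$-shape of the statement then forces the lossy estimates (such as the $9(a+b)$ term) on the factorial and coefficient-sum factors.
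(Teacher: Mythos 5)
Your overall strategy mirrors the paper's: expand $N_\ell(n)=\sum_\mu c_{n,\mu}\mu^{\ell}$ over partition products, isolate the leading term $B_\ell(n)=P(n)^{\ell-1}/D(n)$ with $P(n)=M_1(n)$, exploit the spectral gap (the second-largest partition product is at most $\tfrac{8}{9}P(n)$) so that $\log_{9/8}$ is the natural base, and case-analyze $(a,b)\bmod 3$. The paper implements exactly this via the sandwich $A_\ell(n)\le N_\ell(n)\le p(n)M_1(n)^{\ell-1}$, the subleading expansions of Lemmas~\ref{zweite} and~\ref{dritte}, and the crude but explicit estimates $p(n)\le 2^n$ and $m!\le m^m$.

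The genuine gap is that you never derive the six specific numerical thresholds, and these \emph{are} the content of Theorem~\ref{th:explicit 2}. Your ``explicit $\kappa_n$'' in Step~1 is declared, not computed, and Step~2 replaces the calculation with ``simplifying with the above $D(n)$ produces the stated thresholds.'' To land on, say, the exact $-6+9(a+b)+\tfrac{(a+b)/2+2}{3}\log_{9/8}\bigl(\tfrac{(a+b)-2}{3}\bigr)$ of case 2(b) one must commit to a concrete upper bound $N_\ell(a)N_\ell(b)\le p(a)p(b)\bigl(M_1(a)M_1(b)\bigr)^{\ell-1}$ with $p(n)\le 2^n$, a concrete lower bound $N_\ell(a+b)\ge A_\ell(a+b)$, bound the factorial by $m^m$, and package the slack into the linear $9(a+b)$ term; the threshold is the logarithm of the resulting constant and is not determined by the qualitative shape of your argument. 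A secondary slip: your claim that $\tfrac{D(a+b)}{D(a)D(b)}\ge 2$ in the positive cases fails in case 3(c), where $(a,b)=(5,5)$ gives $\tfrac{6}{7}\binom{2}{1}=\tfrac{12}{7}<2$. The proof only needs $>1$, but the slip shows the positive margins --- under which the $(8/9)^\ell$-error must be pushed --- were not actually tracked, and tracking them is precisely where the stated constants come from.
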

\section{Preliminaries}
Let $\ell \geq 1$ and $g_{\ell}(n)$ denote the amount of subgroups of
$\mathbb{Z}^{\ell}$ of index $n$. Then Dey and Wohlfahrt \cite{De65, Wo77} have obtained (\ref{eq:Wohlfahrt}) that
\begin{equation*}
N_{\ell}(n) = \frac{1}{n} \sum_{k=1}^n g_{\ell}(k) \, N_{\ell}(n-k), \, \, (N_{\ell}(0):=1).
\end{equation*}
Therefore, we have (see also \cite{BF98, ABDV24} for a polynomial version):
	\begin{equation*} \label{eq:product}
		\sum_{n=0}^{\infty} N_{\ell}(n)\, t^n  =
		\prod_{n=1}^{\infty} \left( 1-t^n\right)^{-  \, g_{\ell-1}(n)} =
		\exp \left( \, \sum_{n=1}^{\infty} g_{\ell}(n)  \frac{t^n}{n}\right),
	\end{equation*}
	where $g_0(n)=1$ for $n=1$ and $0$ otherwise.
This leads to	
\begin{equation}\label{explicit}
N_{\ell}(n) =
\sum _{1 \leq k\leq n}\sum _{\substack{m_{1},\ldots ,m_{k}\geq 1 \\ m_{1}+\ldots +m_{k}=n}}
\frac{1}{k!} \, \frac{g _{\ell
}\left( m_{1}\right) \cdots g _{\ell
}\left( m_{k}\right) }{m_{1}\cdots m_{k}}.
\end{equation}
The arithmetic function $g_{\ell}(n)$ has the following properties
(\cite{BF98, LS03, ABDV24, AHN24}).
Let $\ell \geq 1$. Then $g_{\ell}(n)$ is multiplicative and satisfies
for $p$ prime and $m \in \mathbb{N}$:

\begin{equation} \label{local}
g_{\ell}(p^m) = \frac{ \left(p^{\ell}-1 \right) \cdots
\left(p^{\ell + m -1}-1 \right)}{\left( p -1\right) \cdots
\left(p^m -1 \right)}.
\end{equation}
Moreover, we have the recursion formula
\begin{equation*}
g_{\ell}(n)= \sum_{d
\mid n} d \, g_{\ell -1}(d).
\end{equation*}
Further, we will deduce from (\ref{explicit}): 
\begin{lemma}
\label{klein}For $\ell \geq 1$ we have
\begin{eqnarray*}
N_{\ell }\left( 2\right) &=&2^{\ell -1}
,\\
N_{\ell }\left( 3\right) &=&\frac{1}{2}3^{\ell -1}+2^{\ell -1}-\frac{1}{2},\\
N_{\ell }\left( 4\right) &=&\frac{7}{6}4^{\ell -1}+\frac{1}{2}3^{\ell -1}-\frac{1}{2}2^{\ell -1}-\frac{1}{6
}
,\\
N_{\ell }\left( 5\right) &=&\frac{1}{2}6^{\ell -1}+\frac{1}{4}5^{\ell -1}+\frac{7}{6}4^{\ell -1}-2^{\ell -1}+\frac{1}{12}
,\\
N_{\ell
}\left( 7\right) &=&\frac{7}{12}12^{\ell -1}+\frac{1}{4}10^{\ell -1}+\frac{1}{8}9^{\ell -1}+\frac{5}{6}8^{\ell -1}+\frac{1}{6}7^{\ell -1}\\
& & 
{}+\frac{1}{4}6^{\ell -1}-\frac{13}{12}4^{\ell -1}-\frac{1}{3}3^{\ell -1}+\frac{1}{6}2^{\ell -1}+\frac{1}{24}
.
\end{eqnarray*}
\end{lemma}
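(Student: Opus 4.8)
The plan is to run the Dey--Wohlfahrt recursion \eqref{eq:Wohlfahrt}, in the form $N_{\ell}(n)=\frac{1}{n}\sum_{k=1}^{n}g_{\ell}(k)\,N_{\ell}(n-k)$ with $N_{\ell}(0)=1$ (see \cite{De65,Wo77}), together with the closed forms for $g_{\ell}$ at the prime powers that occur for $n\le 7$. From \eqref{local} and multiplicativity one reads off $g_{\ell}(1)=1$, $g_{\ell}(2)=2^{\ell}-1$, $g_{\ell}(3)=\frac{3^{\ell}-1}{2}$, $g_{\ell}(4)=\frac{(2^{\ell}-1)(2^{\ell+1}-1)}{3}$, $g_{\ell}(5)=\frac{5^{\ell}-1}{4}$, $g_{\ell}(6)=g_{\ell}(2)g_{\ell}(3)=\frac{(2^{\ell}-1)(3^{\ell}-1)}{2}$ and $g_{\ell}(7)=\frac{7^{\ell}-1}{6}$. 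Beginning with $N_{\ell}(1)=g_{\ell}(1)N_{\ell}(0)=1$, I would apply the recursion successively for $n=2,3,4,5,6,7$. At each step one substitutes the already-established expressions for the lower $N_{\ell}$ and the formulas for $g_{\ell}$, multiplies everything out, and rewrites each exponential term in the normal form $c^{\,\ell-1}$ using identities such as $2^{\ell}=2\cdot 2^{\ell-1}$, $2^{\ell}3^{\ell}=2\cdot 6^{\ell-1}$ and $(2^{\ell}-1)(2^{\ell+1}-1)=8\cdot 4^{\ell-1}-6\cdot 2^{\ell-1}+1$, and then collects the coefficient of each base. The computation of $N_{\ell}(7)$ requires the intermediate value $N_{\ell}(6)$, which is not part of the statement but is produced for free on the way.

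An equivalent route bypasses the recursion: grouping the compositions $(m_{1},\dots,m_{k})$ of $n$ appearing in \eqref{explicit} according to their underlying partition $\lambda$ turns that formula into $N_{\ell}(n)=\sum_{\lambda\vdash n}\prod_{i\ge 1}\frac{1}{a_{i}!}\bigl(g_{\ell}(i)/i\bigr)^{a_{i}}$, where $a_{i}$ is the number of parts of $\lambda$ equal to $i$. For $n\le 7$ there are at most fifteen partitions, so this is again a finite, if somewhat longer, calculation. I would present the recursion version, since each new line then reuses all the previous ones.

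The only genuine difficulty is the bookkeeping for $n=7$: the products $(2^{\ell}-1)^{2}(2^{\ell+1}-1)$, $(3^{\ell}-1)(2^{\ell}-1)^{2}$ and $(2^{\ell}-1)(2^{\ell+1}-1)(3^{\ell}-1)$ (coming, in the partition picture, from $4+2+1$, $3+2+2$ and $4+3$) must be expanded carefully into combinations of $12^{\ell-1},10^{\ell-1},9^{\ell-1},8^{\ell-1},7^{\ell-1},6^{\ell-1},4^{\ell-1},3^{\ell-1},2^{\ell-1}$ and a constant, and there is substantial cancellation — the coefficient of $4^{\ell-1}$ comes out negative, equal to $-\frac{13}{12}$. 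As a consistency check I would evaluate the resulting polynomials in $\{c^{\ell-1}\}$ at $\ell=2,3,4$ and compare with the entries $p(7)=N_{2}(7)=15$, $N_{3}(7)=170$ and $N_{4}(7)=1810$ of Table~\ref{tab:Nell}, and likewise for $n=2,3,4,5$.
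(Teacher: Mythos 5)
Your proposal is correct, and you've identified the two natural routes accurately. The paper's own proof takes what you call the ``equivalent route'': it reads off from \eqref{explicit} the contributions of each partition $\lambda\vdash n$, in effect using
\[
N_{\ell}(n)=\sum_{\lambda\vdash n}\;\prod_{i\geq 1}\frac{1}{a_{i}!}\Bigl(\frac{g_{\ell}(i)}{i}\Bigr)^{a_{i}},
\]
writes out the (at most five for $n\le 5$) terms explicitly, substitutes the closed forms of $g_{\ell}(1),\dots,g_{\ell}(5)$, expands and collects coefficients of $c^{\ell-1}$. For $n=7$ the paper declares the computation ``proven in the same way'' without displaying the fifteen partitions. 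Your primary route, running the Dey--Wohlfahrt recursion \eqref{eq:Wohlfahrt} upward through $n=2,\dots,7$, is genuinely different as an algorithm but uses the same raw ingredients ($g_{\ell}$ at prime powers, the expansion identities, collecting bases). Its advantage is exactly what you say: each new $n$ reuses the closed forms already proved, so $n=7$ requires only the length-$7$ recursion plus the intermediate $N_{\ell}(6)$, rather than cataloguing all partitions of $7$. Your values of $g_{\ell}(m)$ for $m\le 7$ and the expansion $(2^{\ell}-1)(2^{\ell+1}-1)=8\cdot 4^{\ell-1}-6\cdot 2^{\ell-1}+1$ match the paper, and the proposed consistency check against Table~\ref{tab:Nell} is a reasonable safeguard against arithmetic slips. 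The argument is complete; the only caveat is that you would need to actually carry out the expansions, but the structure of the proof is sound.
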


\begin{proof}
We have $g_{\ell }\left( 1\right) =1$,
$g_{\ell }\left( 2\right) =2^{\ell }-1$.
Therefore,
$$N_{\ell }\left( 2\right) =\frac{1}{1!}\frac{g_{\ell }\left( 2\right) }{2}+\frac{1}{2!}\frac{\left( g_{\ell }\left( 1\right) \right) ^{2}}{1^{2}}=\frac{2^{\ell }-1}{2}+\frac{1}{2}=2^{\ell -1}.$$
As
$g_{\ell }\left( 3\right) =\frac{3^{\ell }-1}{2}$
we obtain
\begin{eqnarray*}
N_{\ell }\left( 3\right) & = & \frac{1}{1!}\frac{g_{\ell }\left( 3\right) }{3}+\frac{1}{1!1
!
}\frac{g_{\ell }\left( 1\right) g_{\ell }\left( 2\right) }{2}+\frac{1}{
3!}\frac{\left( g_{\ell }\left( 1\right) \right) ^{3}}{1^{3}}\\ &= &
\frac{3^{\ell }-1}{6}+
\frac{2^{\ell }-1}{2}+\frac{1}{6}=\frac{1}{2}3^{\ell -1}+2^{\ell -1}-\frac{1}{2}. \end{eqnarray*}

We have
$g_{\ell }\left( 4
\right) =\frac{\left( 2^{\ell +1}-1\right) \left( 2^{\ell }-1\right) }{3}=\frac{2\cdot 4^{\ell }-3\cdot 2^{\ell }+1}{3}$.
Therefore,
\begin{eqnarray*}
N_{\ell }\left( 4\right) & = &\frac{g_{\ell }\left( 4\right) }{4}+
\frac{g_{\ell }\left( 1\right) g_{\ell }\left( 3\right) }{3}+\frac{1}{2!}\frac{\left( g_{\ell }\left( 2\right) \right) ^{2}}{2^{2}}+\frac{1}{1!2!}\frac{\left( g_{\ell }\left( 1\right) \right) ^{2}g_{\ell }\left( 2\right) }{2}+\frac{1}{4!}\frac{\left( g_{\ell }\left( 1\right) \right) ^{4}}{1^{4}}\\ &= &\frac{2\cdot 4^{\ell }-3\cdot 2^{\ell }+1}{12}+\frac{3^{\ell }-1}{6}+\frac{4^{\ell }-2^{\ell +1}+1}{8}+\frac{
2^{\ell }-1}{4}+\frac{1}{24}\\ & = & \frac{7}{6}4^{\ell -1}+\frac{1}{2}3^{\ell -1}-\frac{1}{2}2^{\ell -1}-\frac{1}{6
}.
\end{eqnarray*}

We have
$g_{\ell }\left( 5\right) =\frac{5^{\ell }-1}{4}$
and we obtain
\begin{eqnarray*}
N_{\ell }\left( 5\right) &=&\frac{1}{1!}\frac{
g_{\ell }\left( 5\right)
}{5
}+\frac{1}{1!1!}\frac{g_{\ell }\left( 4\right) g_{\ell }\left( 1\right) }{4}+\frac{1}{1!1!}\frac{g_{\ell }\left( 3\right) g_{\ell }\left( 2\right) }{6}+\frac{1}{1!2!}\frac{g_{\ell }\left( 3\right) \left( g_{\ell }\left( 1\right) 
\right) ^{2}}{3}\\
& & {}+\frac{1}{2!1!}\frac{\left( g_{\ell }\left( 2\right) \right) ^{2}g_{\ell }\left( 1\right) }{2^{2}}
+\frac{1}{1!3!}\frac{g_{\ell }\left( 2\right) \left( g_{\ell }\left( 1\right) \right) ^{3}}{2}+\frac{1}{5!}\frac{\left( g_{\ell }\left( 1\right) \right) ^{5}}{1^{5}}\\
&=&\frac{5^{\ell }-1}{20}
 +\frac{2\cdot 4^{\ell }-3\cdot 2^{\ell }+1}{12}
 +\frac{6^{\ell }-3^{\ell }-2^{\ell }+1}{12}\\
& & {}+\frac{3^{\ell }-1}{6}+\frac{4^{\ell }-2^{\ell +1}+1}{8}+\frac{2^{\ell }-1}{12}+\frac{1}{120}\\
&=&\frac{1}{2}6^{\ell -1}+\frac{1}{4}5^{\ell -1}+\frac{7
}{6
}4^{\ell -1}-
2^{\ell -1}+\frac{1}{
12}.
\end{eqnarray*}
The formula
for
$N_{\ell}(7)$
is proven in the same way.
\end{proof}

We define
\begin{eqnarray*}
R_{4}\left( \ell \right) &=&N_{\ell }\left( 4\right) -\frac{7}{6}4^{\ell -1}-\frac{1}{2}3^{\ell -1},\\
R_{5}\left( \ell \right) &=&N_{\ell }\left( 5\right) -\frac{1}{2}6^{\ell-1}-\frac{1}{4}5^{\ell -1},\\
R_{7}\left( \ell \right) &=&N_{\ell }\left( 7\right) -\frac{7}{12}12^{\ell -1}-\frac{1}{4}10^{\ell -1}.
\end{eqnarray*}
Then
$R_{4}\left( \ell \right)
=
O\left( 2^{\ell }\right) $,
$R_{5}\left( \ell \right) =O\left( 4^{\ell }\right) $,
and
$R_{7}\left( \ell \right) =O\left( 9^{\ell }\right) $
using Landau's notation.
 The following application illustrates
some general pattern.
\begin{corollary}\label{Beispiele}
We have $\Delta_{2,3}^{\ell}<0$ and 
$\Delta_{3,4}^{\ell} <0$ for almost all $\ell$.
\end{corollary}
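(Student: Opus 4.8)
The plan is to reduce both inequalities $\Delta_{2,3}^{\ell}<0$ and $\Delta_{3,4}^{\ell}<0$ to the elementary closed-form expressions provided by Lemma~\ref{klein}, and then analyze which exponential term dominates. First I would compute $\Delta_{2,3}^{\ell}=N_{\ell}(2)N_{\ell}(3)-N_{\ell}(5)$ directly: using $N_{\ell}(2)=2^{\ell-1}$ and $N_{\ell}(3)=\tfrac12 3^{\ell-1}+2^{\ell-1}-\tfrac12$, the product equals $\tfrac12\cdot 6^{\ell-1}+4^{\ell-1}-\tfrac12 2^{\ell-1}$ (up to lower-order terms), while $N_{\ell}(5)=\tfrac12 6^{\ell-1}+\tfrac14 5^{\ell-1}+\tfrac76 4^{\ell-1}-2^{\ell-1}+\tfrac1{12}$. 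The leading $\tfrac12 6^{\ell-1}$ terms cancel exactly, so $\Delta_{2,3}^{\ell}$ is governed by $-\tfrac14 5^{\ell-1}-\tfrac16 4^{\ell-1}+\text{(smaller)}$, which is negative for all $\ell\ge 2$; in fact one can simply verify the resulting expression is negative term by term, or observe it is $-\tfrac14 5^{\ell-1}(1+o(1))$.

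Next I would handle $\Delta_{3,4}^{\ell}=N_{\ell}(3)N_{\ell}(4)-N_{\ell}(7)$ the same way. Here $N_{\ell}(3)\sim\tfrac12 3^{\ell-1}$ and $N_{\ell}(4)\sim\tfrac76 4^{\ell-1}$, so the product is $\sim\tfrac{7}{12}12^{\ell-1}$, whereas $N_{\ell}(7)$ also has leading term $\tfrac{7}{12}12^{\ell-1}$ by Lemma~\ref{klein}. Again these cancel, and the next terms decide the sign. Using the definitions $R_4(\ell)=N_{\ell}(4)-\tfrac76 4^{\ell-1}-\tfrac12 3^{\ell-1}=O(2^{\ell})$ and $R_7(\ell)=N_{\ell}(7)-\tfrac{7}{12}12^{\ell-1}-\tfrac14 10^{\ell-1}=O(9^{\ell})$, I would write
\[
N_{\ell}(3)N_{\ell}(4)=\Bigl(\tfrac12 3^{\ell-1}+2^{\ell-1}-\tfrac12\Bigr)\Bigl(\tfrac76 4^{\ell-1}+\tfrac12 3^{\ell-1}+R_4(\ell)\Bigr),
\]
expand, and compare with $N_{\ell}(7)=\tfrac{7}{12}12^{\ell-1}+\tfrac14 10^{\ell-1}+R_7(\ell)$. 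After the $12^{\ell-1}$ cancellation, the dominant surviving term on the right is $+\tfrac14 10^{\ell-1}$ from $N_{\ell}(7)$; on the left the largest products are $\tfrac76 4^{\ell-1}\cdot 2^{\ell-1}=\tfrac76 8^{\ell-1}$ and cross terms of order $9^{\ell-1}$ coming from $\tfrac12 3^{\ell-1}\cdot R_4(\ell)$ and $R_7(\ell)$, all of which are $o(10^{\ell-1})$. Hence $\Delta_{3,4}^{\ell}=-\tfrac14 10^{\ell-1}(1+o(1))<0$ for $\ell$ large enough, and one checks the finitely many small $\ell$ directly (or notes they are already recorded as negative in Table~\ref{tab:explicit}).

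The main obstacle is the bookkeeping in the second case: several mid-order exponential terms ($8^{\ell-1}$, $9^{\ell-1}$, and the $O(9^\ell)$ error $R_7$) all appear, so one must be careful to confirm that none of them can overtake the negative $\tfrac14 10^{\ell-1}$ term — i.e., that the base $10$ genuinely beats the bases $8$ and $9$ with enough room to absorb constant factors. Since $10>9>8$, this is true for all sufficiently large $\ell$, and pinning down the threshold is a finite check; the qualitative ``almost all $\ell$'' statement follows immediately once the leading asymptotics are isolated, exactly in the spirit of \eqref{eq:asymp}. I would present the argument as: (i) substitute the Lemma~\ref{klein} formulas, (ii) observe the exact cancellation of the top term in each case, (iii) identify the next-largest term and show it is negative and asymptotically dominant, (iv) dispatch small $\ell$ by Table~\ref{tab:explicit}.
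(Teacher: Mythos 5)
Your proposal is correct and follows essentially the same route as the paper: substitute the closed forms from Lemma~\ref{klein} (equivalently, use the $R_4,R_5,R_7$ decompositions), observe that the top exponential term cancels, and conclude that $\Delta_{2,3}^{\ell}=-\tfrac14 5^{\ell-1}+O(4^{\ell})$ and $\Delta_{3,4}^{\ell}=-\tfrac14 10^{\ell-1}+O(9^{\ell})$, both eventually negative. Two small inaccuracies in your write-up do not affect the argument: the identity $N_{\ell}(2)N_{\ell}(3)=\tfrac12 6^{\ell-1}+4^{\ell-1}-\tfrac12 2^{\ell-1}$ is exact (not ``up to lower-order terms''), and the cross term $\tfrac12 3^{\ell-1}R_4(\ell)$ is $O(6^{\ell})$ rather than of order $9^{\ell-1}$, but it is in any case $o(10^{\ell-1})$, which is all that is needed.
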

\begin{proof} We have
\begin{eqnarray*}
\Delta_{3,2}^{\ell}& = &\left( \frac{1}{2}3^{\ell -1}+2^{\ell -1}-\frac{1}{2}\right) 2^{\ell -1}-\frac{1}{2}6^{\ell -1}-\frac{1}{4}5^{\ell -1}-R_{5}\left( \ell \right)\\ & = &-\frac{1}{4}5^{\ell -1}+O\left( 4^{\ell }\right) <0
\end{eqnarray*}
for sufficiently large $\ell $. Similiary, we have
\begin{eqnarray*}
\Delta_{3,4}^{\ell} &=&\left( \frac{1}{2}3^{\ell -1}+2^{\ell -1}-\frac{1}{2}\right) \left( \frac{7}{6}4^{\ell -1}+\frac{1}{2}3^{\ell -1}+R_{4}\left( \ell \right) \right) -
\frac{7}{12}12^{\ell -1}-\frac{1}{4}10^{\ell -1}-R_{7}\left( \ell \right)\\ & =&-\frac{1}{4}10^{\ell -1}+O\left( 9^{\ell }\right) <0
\end{eqnarray*}
for 
sufficiently large $\ell$.
\end{proof}

Finally, we state the following.
Let $\ell \geq 2$. Then we have the upper and lower bounds
\begin{equation*}\label{bounds}
n^{\ell -1} \leq g_{\ell}(n) \leq n^{\ell} \leq \sigma(n) \, n^{\ell -1}.
\end{equation*}
This follows from $g_{\ell}(n)$ multiplicative and formula (\ref{local}).
\section{Proof of Theorem \ref{th:unbounded}}
Let $a,b \geq 2$. Let $n \geq 2$, then we have from (\ref{eq:asymp}) for 
$\ell$ large that $N_{\ell}(n) \sim B_{\ell}(n)$.
Note that $\Delta_{a,b}^{\ell} = \Delta_{b,a}^{\ell}$. Our strategy is to go through each case $a \pmod{3}$.
The pairs $(a,b)$ with $a \equiv 0 \pmod{3}$ and $b \in \{2,4\}$ have to treated separately. 

\subsection{
Case $a \equiv 0 \pmod{3}$}

\subsubsection{
Case $b \equiv 0 \pmod{3}$}

\[ 
\frac{N_{\ell }\left( a\right) N_{\ell }\left( b\right) }{N_{\ell }\left( a+b\right) }
\sim \frac{\frac{1}{2^{a/3}\left( \frac{a}{3}\right) ! }3^{\left( \ell -1\right) a/3}\frac{1}{2^{b/3}\left( \frac{b}{3}\right) !}3^{\left( \ell -1\right) b/3}}{\frac{1}{2^{\left( a+b\right) /3}\left( \frac{a+b}{3}\right) !}3^{\left( \ell -1\right) \left( a+b\right) /3}}=\frac{\left( \frac{a+b}{3}\right) !}{\left( \frac{a}{3}\right) !\left( \frac{b}{3}\right) !}=\binom{\frac{a+b}{3}}{\frac{a}{3}}>1
.
\]

\subsubsection{
Case $b \equiv 1 \pmod{3}$}

Then for $b \neq 4$ we have
\[
\frac{N_{\ell }\left( a\right) N_{\ell }\left( b\right) }{N_{\ell }\left( a+b\right) }\sim \frac{\frac{1}{2^{a/3}\left( \frac{a}{3}\right) !}3^{\left( \ell -1\right) a/3}\frac{7}{6}\frac{1}{2^{\frac{b-4}{3}}
\left( \frac{b-4}{3}\right) !}\left( 4\cdot 3^{\frac{b-4}{3}}\right) ^{\ell -1}}{\frac{7}{6}\frac{1}{2^{\frac{a+b-4}{3}}
\left( \frac{a+b-4}{3}\right) !}\left( 4\cdot 3^{\frac{a+b-4}{3}}\right) ^{\ell -1}}=\binom{\frac{a+b-4}{3}}{\frac{a}{3}}>1
.
\]

\subsubsection{
Case $b \equiv 2 \pmod{3}$}

Then for $b \neq 2$ we have
\[
\frac{N_{\ell }\left( a\right) N_{\ell }\left( b\right) }{N_{\ell }\left( a+b\right) }\sim \frac{\frac{1}{2^{a/3}\left( \frac{a}{3}\right) !}3^{\left( \ell -1\right) a/3}\frac{1}{2^{\frac{b-2}{3}}\left( \frac{b-2}{3}\right) !}\left( 2\cdot 3^{\frac{b-2}{3}}\right) ^{\ell -1}}{\frac{1}{2^{\frac{a+b-2}{3}}\left( \frac{a+b-2}{3}\right) !}\left( 2\cdot 3^{\frac{a+b-2}{3}}\right) ^{\ell -1}}=\binom{\frac{a+b-2}{3}}{\frac{a}{3}}>1
.
\]

\subsection{
Case $a \equiv 1 \pmod{3}$}

\subsubsection{
Case $b \equiv 1 \pmod{3}$}

\begin{eqnarray*}
\frac{N_{\ell }\left( a\right) N_{\ell }\left( b\right) }{N_{\ell }\left( a+b\right) }&\sim &\frac{\frac{7}{6}\frac{1}{2^{\frac{a-4}{3}}
\left( \frac{a-4}{3}\right) !}\left( 4\cdot 3^{\frac{a-4}{3}}\right) ^{\ell -1}\frac{7}{6}\frac{1}{2^{\frac{b-4}{3}}
\left( \frac{b-4}{3}\right) !}\left( 4\cdot 3^{\frac{b-4}{3}}\right) ^{\ell -1}}{\frac{1}{2^{\frac{a+b-2}{3}}\left( \frac{a+b-2}{3}\right) !}\left( 2\cdot 3^{\frac{a+b-2}{3}}\right) ^{\ell -1}}\\
&=&\frac{49}{2^{-2}\cdot 36}\frac{\left( \frac{a+b-2}{3}\right) !}{\left( \frac{a-4}{3}\right) !\left( \frac{b-4}{3}\right) !}\left( 8\cdot 3^{-2}\right) ^{\ell -1}\rightarrow 0.
\end{eqnarray*}
\subsubsection{
Case $b \equiv 2 \pmod{3}$}

\begin{eqnarray*}
\frac{N_{\ell }\left( a\right) N_{\ell }\left( b\right) }{N_{\ell }\left( a+b\right) }&\sim &\frac{\frac{7}{6}\frac{1}{2^{\frac{a-4}{3}}
\left( \frac{a-4}{3}\right) !}\left( 4\cdot 3^{\frac{a-4}{3}}\right) ^{\ell -1}\cdot \frac{1}{2^{\frac{b-2}{3}}\left( \frac{b-2}{3}\right) !}\left( 2\cdot 3^{\frac{b-2}{3}}\right) ^{\ell -1}}{\frac{1}{2^{\frac{a+b}{3}}\left( \frac{a+b}{3}\right) !}\left( 3^{\left( a+b\right) /3}\right) ^{\ell -1}}\\
&=&\frac{7}{6\cdot 2^{-2}}\frac{\left( \frac{a+b}{3}\right) !}{\left( \frac{a-4}{3}\right) !\left( \frac{b-2}{3}\right) !}\left ( \frac{8}{9}\right) ^{\ell -1}\rightarrow 0
.
\end{eqnarray*}

\subsection{
Case $a \equiv 2 \pmod{3}$}

\subsubsection{
Case $b \equiv 2 \pmod{3}$}

\[
\frac{N_{\ell }\left( a\right) N_{\ell }\left( b\right) }{N_{\ell }\left( a+b\right) }\sim \frac{\frac{1}{2^{\frac{a-2}{3}}\left( \frac{a-2}{3}\right) !}\left( 2\cdot 3^{\frac{a-2}{3}}\right) ^{\ell -1}\frac{1}{2^{\frac{b-2}{3}}\left( \frac{b-2}{3}\right) !}\left( 2\cdot 3^{\frac{b-2}{3}}\right) ^{\ell -1}}{\frac{7}{6}\frac{1}{2^{\frac{a+b-4}{3}}
\left( \frac{a+b-4}{3}\right) !}\left( 4\cdot 3^{\frac{a+b-4}{3}}\right) ^{\ell -1}}=\frac {6}{7}\binom{\frac{a+b-4}{3}}{\frac{a-2}{3}}
.
\]
For $a,b >2$ we conclude that $\Delta_{a,b}^{\ell} >0$ for $\ell$ large.
Otherwise $\Delta_{a,b}^{\ell} <0$ for $a$ or $b$ equal to $2$ and $\ell$ large (and $a,b \equiv 2 \pmod{3}$).
%
\subsection{The cases $a \equiv 0 \pmod{3}$ and $b = 2,4$}
The cases $\Delta_{3,2}^{\ell}$ and $\Delta_{3,4}^{\ell}$ had been already treated in Corollary \ref{Beispiele}.
For the general cases $a>3$:  $(a,2)$ and $(a,4)$ with $a \equiv 0 \pmod{3}$,
the following property is useful.


\begin{lemma}
\label{zweite}Let
\[
D_{\ell }\left( n\right) =\left\{
\begin{array}{ll}
\frac{
5}{6
}\frac{1}{ 2^{\frac{n-6}{3}}\left( \frac{n-6}{3}\right) !}\left( 2^{3}\cdot 3^{\frac{n-6}{3}}\right) ^{\ell -1}
,&n\equiv 0
\mod 3
,
n>3,
\\
\frac{41}{120}\frac{1}{2^{\frac{n-10}{3}}\left( \frac{n-10}{3}\right)!}\left( 2^{5}\cdot 3^{\frac{n-10}{3}}\right) ^{\ell -1}
,&n\equiv 1
\mod 3
,
n>7,
\\
\frac{43}{72}\frac{1}{2^{\frac{n-8}{3}}\left( \frac{n-8}{3}\right) !}\left( 2^{4}\cdot 3^{\frac{n-8}{3}}\right) ^{\ell -1}
,&n\equiv 2\mod 3,n>5.
\end{array}
\right.
\]
Then
we have
$
N_{\ell }\left( n
\right) =B
_{\ell }\left( n\right) +\tilde{B}_{\ell }\left( n\right) $
with
\[
\tilde{B}_{\ell }\left( n\right) =D_{\ell }\left( n\right) +
\left\{
\begin{array}{ll}
o\left( \left( 2^{3}\cdot 3^{\frac{n-6}{3}}\right) ^{\ell }\right)
,&n\equiv 0
\mod 3
,
n>3,
\\
o\left( \left( 2^{5}\cdot 3^{\frac{n-10}{3}}\right) ^{\ell }\right)
,&n\equiv 1
\mod 3
,
n>7,
\\
o\left( \left( 2^{4}\cdot 3^{\frac{n-8}{3}}\right) ^{\ell }\right)
,&n\equiv 2\mod 3,n>5.
\end{array}
\right.
\]
\end{lemma}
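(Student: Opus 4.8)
The plan is to read off the two leading exponentials of $N_{\ell}(n)$, as $\ell\to\infty$, directly from the finite identity (\ref{explicit}). Collecting in (\ref{explicit}) the ordered tuples $(m_1,\dots,m_k)$ of a common multiset type and cancelling the $1/k!$ against the multinomial count, one obtains
\[
N_{\ell}(n)=\sum_{\lambda\vdash n}T_{\lambda}(\ell),\qquad
T_{\lambda}(\ell):=\prod_{m\ge 1}\frac{1}{r_m(\lambda)!}\left(\frac{g_{\ell}(m)}{m}\right)^{r_m(\lambda)},
\]
where $r_m(\lambda)$ is the multiplicity of $m$ in $\lambda$. By (\ref{local}) each $g_{\ell}(m)$ is a $\Q$-linear combination of the exponentials $d^{\,\ell}$ with $d\mid m$, whose leading term is $c_m m^{\ell}$ for some $c_m>0$ — explicitly $c_2=1$, $c_3=\tfrac12$, $c_4=\tfrac23$ — and whose next base is at most $m/2$; hence $g_{\ell}(m)/m=c_m m^{\ell-1}+O\!\big((m/2)^{\ell}\big)$. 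Expanding the product then gives
\[
T_{\lambda}(\ell)=K_{\lambda}\,\Pi(\lambda)^{\ell-1}+O\!\left(\big(\tfrac12\Pi(\lambda)\big)^{\ell}\right),\qquad
K_{\lambda}:=\prod_{m}\frac{c_m^{r_m(\lambda)}}{r_m(\lambda)!}>0,\quad \Pi(\lambda):=\prod_i\lambda_i,
\]
the implied constant depending only on $\lambda$ (so on the fixed $n$). Thus $N_{\ell}(n)$ is controlled, stratum by stratum, by the partitions of $n$ of largest part-product $\Pi(\lambda)$.

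Next I would invoke the classical description of product-maximising partitions: the largest part-product $\Pi_1:=\max_{\lambda\vdash n}\Pi(\lambda)$ equals $3^{n/3}$, $4\cdot 3^{(n-4)/3}$, or $2\cdot 3^{(n-2)/3}$ according as $n\equiv 0,1,2\pmod 3$, attained exactly by $(3^{n/3})$; by $(3^{(n-4)/3},2,2)$ and $(3^{(n-4)/3},4)$; or by $(3^{(n-2)/3},2)$. Summing $K_{\lambda}\Pi(\lambda)^{\ell-1}$ over this top stratum reproduces $B_{\ell}(n)$ exactly, the coefficient $\tfrac76$ for $n\equiv1$ being $c_2^2/2!+c_4=\tfrac12+\tfrac23$. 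The second-largest product $\Pi_2$ is obtained by trading one block $3+3$ for $2+2+2$, equivalently $4+2$; for $n>3$ (resp.\ $n>7$, $n>5$) this is $2^3\cdot 3^{(n-6)/3}$ (resp.\ $2^5\cdot 3^{(n-10)/3}$, $2^4\cdot 3^{(n-8)/3}$), and is attained precisely by the partitions obtained from the top-stratum ones by that swap together with all their ``$2+2\leftrightarrow 4$'' variants. Summing $K_{\lambda}$ over those partitions, the arithmetic of $c_2,c_3,c_4$ yields the coefficients $\tfrac16+\tfrac23=\tfrac56$ ($n\equiv0$), $\tfrac1{120}+\tfrac19+\tfrac29=\tfrac{41}{120}$ ($n\equiv1$), $\tfrac1{24}+\tfrac13+\tfrac29=\tfrac{43}{72}$ ($n\equiv2$); that is exactly the stated $D_{\ell}(n)$.

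It remains to estimate the remainder $N_{\ell}(n)-B_{\ell}(n)-D_{\ell}(n)$. It consists of $\sum K_{\lambda}\Pi(\lambda)^{\ell-1}$ over partitions with $\Pi(\lambda)\le\Pi_3$, the third-largest part-product, plus the error terms $O\!\big((\tfrac12\Pi(\lambda))^{\ell}\big)$ coming from all the $T_{\lambda}$, the largest of which has base $\tfrac12\Pi_1$. Hence the remainder is $O(\rho^{\,\ell})$ with $\rho:=\max(\Pi_3,\tfrac12\Pi_1)$, and a short case check gives $\rho<\Pi_2$ in each residue class — for instance, when $n\equiv1$ one has $\tfrac12\Pi_1=18\cdot 3^{(n-10)/3}$ and $\Pi_3=30\cdot 3^{(n-10)/3}$ (realised by $(5,3^{(n-7)/3},2)$), both below $\Pi_2=32\cdot 3^{(n-10)/3}$. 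Therefore the remainder is $o(\Pi_2^{\,\ell})$, which is exactly the asserted error in $\tilde B_{\ell}(n)=D_{\ell}(n)+o(\cdot)$ — note $\Pi_2$ is precisely the base appearing in $D_{\ell}(n)$ — and the hypotheses $n>3$, $n>7$, $n>5$ are exactly those ensuring the second stratum is non-empty.

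The main obstacle is the combinatorial bookkeeping of the middle paragraph: listing \emph{all} partitions of $n$ that attain the second-largest part-product — in particular handling the ``$4$'' versus ``$2+2$'' alternatives, which carry the different weights $c_4=\tfrac23$ and $c_2^2/2!=\tfrac12$ — and summing these weights to the precise rationals $\tfrac56,\tfrac{41}{120},\tfrac{43}{72}$, together with the finite but slightly tedious verification of the product gap $\max(\Pi_3,\tfrac12\Pi_1)<\Pi_2$ in the three cases. Once the structure of near-optimal product partitions is in hand, the rest is routine expansion of (\ref{explicit}) via (\ref{local}).
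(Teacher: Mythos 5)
The paper states Lemma \ref{zweite} without proof (deferring to the methods of \cite{AHN24} and remarking, for the companion Lemma \ref{dritte}, that the argument ``is similar that of the leading or subleading term''), so your blind proposal must be judged on its own. I checked it and it is correct, and it is visibly the same underlying mechanism the authors use throughout Section 4: group (\ref{explicit}) by the partition type $\lambda$, observe via (\ref{local}) that each factor $g_{\ell}(m)/m=c_m m^{\ell-1}+O\!\big((m/2)^{\ell-1}\big)$ with $c_2=1$, $c_3=\tfrac12$, $c_4=\tfrac23$, and stratify by the part-product $\Pi(\lambda)=\prod_i\lambda_i$. I re-verified all the numerics you need: the top-stratum $K_\lambda$ sums reproduce the coefficients $1$, $\tfrac76$, $1$ of $B_{\ell}(n)$; the second-stratum partitions (exactly the $\{2,3,4\}$-partitions with one $3+3$ block traded for a product-$8$ block and all $4\leftrightarrow 2+2$ variants) give $\tfrac16+\tfrac23=\tfrac56$, $\tfrac1{120}+\tfrac19+\tfrac29=\tfrac{41}{120}$, and $\tfrac1{24}+\tfrac13+\tfrac29=\tfrac{43}{72}$, matching $D_{\ell}(n)$; and in each residue class $\max\!\big(\Pi_3,\tfrac12\Pi_1\big)<\Pi_2=\tfrac89\Pi_1$ (indeed $\Pi_3=\tfrac56\Pi_1$ from the unique $5$-containing competitor, so $\tfrac56<\tfrac89$ and $\tfrac12<\tfrac89$), so the remainder is genuinely $o(\Pi_2^{\ell})$. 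The side conditions $n>3,7,5$ are, as you say, exactly what makes the second stratum nonempty.

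One presentational remark: you invoke ``the classical description of product-maximising partitions'' but for the second stratum you implicitly also need that the two leading strata contain \emph{only} parts from $\{2,3,4\}$, i.e.\ that any partition with a part $1$ or a part $\ge 5$ already has $\Pi(\lambda)\le\tfrac56\Pi_1<\Pi_2$. This is an easy swap argument ($5\to 2+3$ gains a factor $\tfrac65$, $6\to 3+3$ gains $\tfrac32$, $1+2\to 3$ gains $\tfrac32$, etc.) but it deserves a sentence, since the coefficient computation for $D_{\ell}(n)$ depends on having enumerated the second stratum \emph{completely}. Similarly the ``short case check'' for $\rho<\Pi_2$ is only spelled out for $n\equiv1\pmod3$; the other two cases are analogous and I confirmed them, but in a written-up version they should appear. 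These are polish issues, not gaps.
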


Let now $a\equiv 0\mod 3$, $a>3$. Then
\begin{eqnarray*}
&&{N_{\ell }\left( a\right) N_{\ell }\left( 4\right) }-{N_{\ell }\left( a+4\right) }\\
&=&
\left( B
_{\ell }\left( a
\right) +\tilde{B}_{\ell }\left(
a\right)
\right) \left( \frac{7}{6}4^{\ell -1}+\frac{1}{2}3^{\ell -1}+O\left( 2^{\ell }\right) \right) -
B
_{\ell }\left( a+4\right) -\tilde{B}_{\ell }\left( a+4\right) \\
&=&\left( \frac{35}{36}-\frac{41}{120}\right) \frac{1}{2^{\frac{a-6}{3}}\left( \frac{a-6}{3}\right) !}
\left ( 2^{5}\cdot 3^{\frac{a-6}{3}}\right) ^{\ell -1}+o\left( \left( 2^{5}\cdot 3^{\frac{a-6}{3}}\right) ^{\ell }\right) >0
.
\end{eqnarray*}

Let $a\equiv 0
\mod 3$. Then
\begin{eqnarray*}
&&{N_{\ell }\left( a\right) N_{\ell }\left( 2\right) }-{N_{\ell}\left( a+2\right) }\\
&=&
\left( B
_{\ell }\left( a\right) +\tilde{B}_{\ell }\left( a\right)
\right) 2^{\ell -1}-
B
_{\ell }\left( a+2\right) -\tilde{B}_{\ell }\left( a+2\right) \\
&=&\left( \frac{5}{6}-\frac{43}{72}\right) \frac{1}{2^{\frac{a-6}{3}}\cdot \left( \frac{a-6}{3}\right) }\left( 2^{4}\cdot 3^{\frac{a-6}{3}}\right) ^{\ell -1}+o\left( \left( 2^{4}\cdot 3^{\frac{a-6}{3}}\right) ^{\ell }\right) >0
.
\end{eqnarray*}
\section{Proof of Theorem \ref{th:explicit 1}
and Theorem \ref{th:explicit 2}}
Let $n \geq 2$. As proposed in \cite{AHN24} let
\[
M_{1}\left( n\right) =\left\{
\begin{array}{ll}
3^{n/3},&n\equiv 0\mod 3,\\
4\cdot 3^{\frac{n-4}{3}},&n\equiv 1\mod 3,\\
2\cdot 3^{\frac{n-2}{3}},&n\equiv 2\mod 3,
\end{array}
\right.
\]
and
\[
A_{\ell }\left( n\right) =\left\{
\begin{array}{ll}
\frac{1}{\left( \frac{n}{3}\right) !}3^{\left( \ell -2\right) n/3},&n\equiv 0\mod 3,\\
\frac{2}{3}\frac{1}{\left( \frac{n-4}{3}\right) !}\left( 4\cdot 3^{\frac{n-4}{3}}\right) ^{\ell -2},&n\equiv 1\mod 3,\\
\frac{1}{\left( \frac{n-2}{3}\right) !}\left( 2\cdot 3^{\frac{n-2}{3}}\right) ^{\ell -2}, &n\equiv 2\mod 3.
\end{array}
\right.
\]
Let $\ell \geq 2$ and $p(n)=N_{2}(n)$ the number of partitions of $n$, then
\begin{equation} \label{A: bounds}
A_{\ell }\left( n\right) \leq N_{\ell }\left( n\right) \leq p\left( n\right) \left( M_{1}\left( n\right) \right) ^{\ell -1}.
\end{equation}
It turns out that these lower and upper bounds of $N_{\ell}(n)$ are suffient to
prove most of the parts of Theorem \ref{th:explicit 1}
and Theorem \ref{th:explicit 2}.
We follow the same structure of the proof as of the proof of Theorem \ref{eq:asymp} to demonstrate the similarities and differences.
In the following we estimate $p(n)$ by
$2^{n}$.

\subsection{
Case $a \equiv 0 \pmod{3}$}

We consider the three cases $b \equiv 0,1,2 \pmod{3}$, where $b=2$ and $b=4$ have to be considered separately.

\subsubsection{
Case $b \equiv 0 \pmod{3}$} 

Let $\ell >1+\frac{a+b}{3}\log _{9/8}\left( 4\frac{a+b}{3}\right) $. Then
\begin{eqnarray*}
\frac{N_{\ell }\left( a\right) N_{\ell }\left( b\right) }{N_{\ell }\left( a+b\right) }&\geq &\frac{\frac{1}{2^{\frac{a}{3}}\left( \frac{a}{3}\right) !}\frac{1}{2^{\frac{b}{3}}\left( \frac{b}{3}\right) !}3^{\left( \ell -1\right) \frac{a+b}{3}}}{\frac{1}{2^{\frac{a+b}{3}}\left( \frac{a+b}{3}\right) !}3^{\left( \ell -1\right) \frac{a+b}{3}}+\left( 2^{3}\cdot 3^{\frac{a+b-6}{3}}\right) ^{\ell -1}p\left( a+b\right) }\\
&=&\binom{\frac{a+b}{3}}{\frac{a}{3}}\frac{1}{1+2^{\frac{a+b}{3}}\left( \frac{a+b}{3}\right) !p\left( a+b\right) \left( \frac{8}{9}\right) ^{\ell -1}}\\
&\geq &\binom{\frac{a+b}{3}}{\frac{a}{3}}\frac{1}{1+\left( 4
\frac{a+b}{3}\right) ^{\frac{a+b}{3}}\left( \frac{8}{9}\right) ^{\ell -1}}>1
\end{eqnarray*}

\subsubsection{
Case $b \equiv 1 \pmod{3}$, $b\neq 4$} 

Let 
$$\ell \geq \log _{9/8}\left( \frac{7}{6}\right) +\frac{a+b-2}{3}\log _{9/8}\left( 4\right) +\frac{a+b-4}{3}\log {9/8}\left( \frac{a+b-4}{3}\right),$$
then the following quotient is larger than $1$, and therefore, the Bessenrodt--Ono inequality is satisfied:
\begin{eqnarray*}
\frac{N_{\ell }\left( a\right) N_{\ell }\left( b\right) }{N_{\ell }\left( a+b\right) }
&\geq& \frac{\frac{7}{6}\frac{1}{2^{\frac{a}{3}}\left( \frac{a}{3}\right) !}\frac{1}{2^{\frac{b-4}{3}}\left( \frac{b-4}{3}\right) !}\left( 4\cdot 3^{\frac{a+b-4}{3}}\right) ^{\ell -1}}{\frac{7}{6}\frac{1}{2^{\frac{a+b-4}{3}}\left( \frac{a+b-4}{3}\right) !}\left( 4\cdot 3^{\frac{a+b-4}{3}}\right) ^{\ell -1}+p\left( a+b\right) \left( 2^{5}\cdot 3^{\frac{a+b-10}{3}}\right) ^{\ell -1}}\\
&=& \binom{\frac{a+b-4}{3}}{\frac{a}{3}}\frac{1}{1+\frac{6}{7}2^{a+b-4}\left( \frac{a+b-4}{3}\right) !p\left( a+b\right) \left( \frac{8}{9}\right) ^{\ell -1}}\\
&>&\binom{\frac{a+b-4}{3}}{\frac{a}{3}}\frac{1}{1+\frac{6}{7}4^{\frac{a+b-2}{3}}\left( \frac{a+b-4}{3}\right) ^{\frac{a+b-4}{3}}\left( \frac{8}{9}\right) ^{\ell -1}}.
\end{eqnarray*}

\subsubsection{
Case $b\equiv 2 \pmod{3}$ and $b \neq 2$}

Let
$$\ell \geq 1+\frac{a+b-1}{3}\log _{9/8}\left( 4\right) +\frac{a+b-2}{3}\log _{9/8}\left( \frac{a+b-2}{3}\right). $$
Then we have
$\frac{N_{\ell }\left( a\right)
N_{\ell }\left( b\right) }{N_{\ell }\left( a+b\right) }
>\binom{\frac{a+b-2}{3}}{\frac{a}{3}}\frac{1}{1+4^{\frac{a+b-1}{3}}\left( \frac{a+b-2}{3}\right) ^{\frac{a+b-2}{3}}\left( \frac{8}{9}\right) ^{\ell -1}}\geq 1$.

\subsection{
Case $a \equiv 1 \pmod{3}$}

The proofs for $b\equiv 1,2 \pmod{3}$ are straightforward.

\subsubsection{
Case $b \equiv 1 \pmod{3}$}

Let
\begin{eqnarray*}
\ell
&\geq&
-6+9\left( a+b\right)
+\frac{\left( a+b\right)
/2+2}{3}\log _{9/8}\left( \frac{
a+b-2}{3}\right) \\
&>&
2+\left(
a+b\right)
\log _{9/8}\left( 2\right) +
\log _{9/8}\left( 4/9\right)
+\frac{
a+b
/2+2}{3}\log _{9/8}\left( \frac{
a+b
-2}{3}\right)\\
& &+\log _{9/8}\left( 16\right)
+\frac{
a+b
-8}{3}\log _{9/8}\left( 3\right).
\end{eqnarray*}
Then
\begin{eqnarray*}
\frac{N_{\ell }\left( a\right) N_{\ell }\left( b\right) }{N_{\ell }\left(
a+b
\right) }
&\leq &p\left( a\right) p\left( b\right) \frac{4}{9}\frac{\left( \frac{a+b
-2}{3}\right) !}{\left( \frac{a-4}{3}\right) !\left( \frac{b-4}{3}\right) !}16\cdot 3^{\frac{
-8}{3}}\left( \frac{8}{9}\right) ^{\ell-2}\\
&\leq &\frac{2^{
\left( a+b\right)
+6}}{9}\left( \frac{a+b
-2}{3}\right) ^{\frac{\left( a+b\right)
/2+2}{3}}
3^{\frac{
a+b
-8}{3}}
\left( \frac{8}{9}\right) ^{\ell -2}
<1.
\end{eqnarray*}

\subsubsection{
Case $b \equiv 2 \pmod{3}$}

\begin{eqnarray*}
\frac{N_{\ell }\left( a\right) N_{\ell }\left( b\right) }{N_{\ell }\left( a+b\right) }&\leq &\frac{p\left( a\right) p\left( b\right) \frac{3}{2}\frac{1}{\left( \frac{a-4}{3}\right) !}\frac{1}{\left( \frac{b-2}{3}\right) !}\left( 8\cdot 3^{\frac{a+b-6}{3}}\right) ^{\ell -1}}{\frac{1}{\left( \frac{a+b}{3}\right) !}3^{\left( \ell -2\right) \left( a+b\right) /3}}\\
&\leq &\frac{3}{2}2^{n}\left( \frac{n}{3}\right) ^{n/2+4}\left( 8\cdot 3^{\frac{n-6}{3}}\right) \left( \frac{8}{9}\right) ^{\ell -2}<1
\end{eqnarray*}
if
\begin{eqnarray*}
\ell
&\geq&
5+9\left( a+b\right)
+\left( \frac{a+b
}{2}+4\right) \log _{9/8}\left( \frac{
a+b}{3}\right)\\
&>&
2+\log _{9/8}\left( \frac{3}{2}\right) +n\log _{9/8}\left( 2\right) +
\frac{\left( a+b\right)
/2+4}{3}
\log _{9/8}\left( \frac{a+b
}{3}\right) \\ & &+\log  _{9/8}\left( 8\right) +\frac{a+b
-6}{3}\log _{9/8}\left( 3\right).
\end{eqnarray*}
\subsection{
Case $a \equiv 2 \pmod{3}$ and $b\equiv 2 \pmod{3}$} 

\subsubsection{
Case $a,b>2$}

Let
$$\ell \geq 1+\log _{9/8}\left( \frac{7}{6}\right) +\left( \frac{a+b-4}{3}+a+b\right) \log _{9/8}\left( 2\right).$$
Then we have the following inequality
\[
\frac{N_{\ell }\left( a\right) N_{\ell
}\left( b\right) }{N_{\ell }\left( a+b\right) }
>
\frac{6}{7}\binom{\frac{a+b-4}{3}}
{\frac{a-2}{3}}\frac{1}{1+\frac{6}{7}
2^{\frac{a+b-4}{3}}2^{a+b}\left( \frac{8}{9}\right) ^{\ell -1}}\geq 1.
\]

\subsubsection{
Case $a \equiv 2 \pmod{3}$ and $b=2$}

We start with the case $(a,b)=(2,2)$. \\
We obtain from (\ref{explicit}) that for $\ell >1$:
\begin{eqnarray*}
\frac{N_{\ell }\left( 4\right) }{\left( N_{\ell }
\left( 2\right) \right) ^{2}}
&=&\frac{7}{6}+\frac{1}{2}\left( \frac{3}{4}\right) ^{\ell -1}-\frac{1}{2}2^{1-\ell }-\frac{1}{2}4^{1-\ell }\\
&=& \frac{7}{6}+\frac{3}{8}\left( \frac{3}{4}\right) ^{\ell -2}-\frac{1}{4}2^{2-\ell }-\frac{1}{8}4^{2-\ell }
\geq \frac{7}{6}>1.
\end{eqnarray*}
Further, let $(a,b)= (5,2)$ and let
$\ell \geq 16>1+\log _{6/5}\left( 6\frac{17}{7}\right) $. Then
\begin{eqnarray*}
\frac{N_{\ell }\left( 5\right) N_{\ell }\left( 2\right) }{N_{\ell }\left( 7\right) } & < & \frac{\frac{1}{2}12^{\ell -1}+\frac{17}{12}10^{\ell -1}}{\frac{7}{12}12^{\ell -1}}\\
&=& \frac{6}{7}+\frac{17}{7}\left( \frac{5}{6}\right) ^{\ell -1}\leq 1.
\end{eqnarray*}
Actually, $\Delta_{5,2}^{\ell} <1$ for all $\ell$. 
The remaining cases $1\leq \ell \leq 15$ can be checked directly.
Finally, let $a\equiv 2
\pmod{3}$ and $a>5$. Then we have
\begin{eqnarray*}
N_{\ell }\left( a\right) & \leq & \frac{1}{2^{\frac{a-2}{3}}\left( \frac{a-2}{3}\right) !}\left( 2\cdot 3^{\frac{a-2}{3}}\right) ^{\ell -1}+p\left( a\right) \left( 2^{4}\cdot 3^{\frac{a-8}{3}}\right) ^{\ell -1} \\
N_{\ell }\left( a+2\right) & \geq & \frac{7}{6}\frac{1}{2^{\frac{a-2}{3}}\left( \frac{a-2}{3}\right) !}\left( 4\cdot 3^{\frac{a-2}{3}}\right) ^{\ell -1}.
\end{eqnarray*}
Therefore, we obtain for 
$\ell \geq 1+\log _{9/8}\left( 6\right) +\frac{a-2}{3}\log _{9/8}\left( 2\frac{a-2}{3}\right) +a\log _{9/8}\left(
2\right) $ that 
\begin{eqnarray*}
\frac{N_{\ell }\left( a\right) N_{\ell }\left( 2\right) }{N_{\ell }\left( a+2\right) }
&<&\frac{6}{7}\left(
1+2^{\frac{a-2}{3}}\left( \frac{a-2}{3}\right) !p\left( a\right) \left( \frac{8}{9}\right) ^{\ell -1}\right) \\
&<&\frac{6}{7}\left( 1+\left( 2\frac{a-2}{3}\right) ^{\frac{a-2}{3}}2^{a}\left( \frac{8}{9}\right) ^{\ell -1}
\right) \leq 1.
\end{eqnarray*}
We still have to deal with $a \equiv 0 \pmod{3}$ and $b =2,4$.
It turns out that these are the most difficult cases.

\subsection{
Case $a \equiv 0 \pmod{3}$ and $b=2,4$}

In this case we first study $a=3,6,9$ explictly.

\subsubsection{
Case $(a,b)=(3,2)$}

Let $R_{5}\left( \ell \right) >\frac{1}{6}4^{\ell -1}$. 
If
$\ell \geq 7
>1+\log _{5/4}\left( 10/3\right) $ 
then
\begin{eqnarray*}
N_{\ell }\left( 2\right) N_{\ell }\left( 3\right) -N_{\ell }\left( 5\right)
&=& \left( \frac{1}{2}3^{\ell -1}+2^{\ell -1}-\frac{1}{2}\right) 2^{\ell -1}-\frac{1}{2}6^{\ell -1}-\frac{1}{4}5^{\ell-1}-R_{5}\left( \ell \right) \\
&=&-\frac{1}{4}5^{\ell -1}+4^{\ell -1}-\frac{1}{2}2^{\ell -1}-R_{5}\left( \ell \right)\\
&<
&-\frac{1}{4}5^{\ell -1}+\frac{5}{
6}4^{\ell -1}\leq 0.
\end{eqnarray*}
Moreover, for all $\ell \geq 2$ we have $\Delta_{3,2}^{\ell} <0$. Here
the remaining cases $1\leq \ell \leq 6$ can be
checked directly.

\subsubsection{
Case $(a,b)=(3,4)$}

We show $\Delta_{a,3}^{\ell}<0$ for all $\ell \geq 2$.
Since $N_{\ell }\left( 3\right) <\frac{1}{2}3^{\ell -1}+2^{\ell -1}$
and $R_{7}\left( \ell \right) \geq -\frac{1}{24}3^{\ell -1}$
(see Lemma \ref{klein}):
\begin{eqnarray*}
N_{\ell }\left( 3\right) N_{\ell }\left( 4\right) -N_{\ell }\left( 7\right)
&<&
\left( \frac{1}{2}3^{\ell -1}
+2^{\ell -1}
\right) \left( \frac{7}{6}4^{\ell -1}+\frac{1}{2}3^{\ell -1}\right)\\
& & {}-\frac{7}{12}12^{\ell -1}-\frac{1}{4}10^{\ell -1}
-R_{7}\left( \ell \right) \\
&\leq & -\frac{1}{4}10^{\ell -1}+\frac{1}{4}9^{\ell -1}
+\frac{7}{6}8^{\ell -1}+\frac{1}{2}6^{\ell -1}+\frac{1}{24}3^{\ell -1}
\\&\leq & -\frac{1}{4}10^{\ell -1}+\frac{47}{24}9^{\ell -1}\leq 0.
\end{eqnarray*}
if
$\ell \geq 20>
\log _{10/9}\left( \frac{47}{6}\right) $. We
checked the
remaining cases $2\leq \ell \leq 19$
 with
PARI/GP.

Before we continue, we record the 
next-to-subleading term.

\subsubsection{Growth expansion}

In the following we need even the 
next-to-subleading term so that we can
determine whether the Bessenrodt--Ono
inequality holds or not. For completeness
we determine all possible cases. Its proof
is similar that of the 
leading or 
subleading term.

\begin{lemma}
\label{dritte}For the 
next-to-subleading terms we
obtain
\[
M_{3}\left( n\right) =\left\{
\begin{array}{ll}
1,&n=3,\\
2,&n=4,\\
4,&n=5,\\
6,&n=6,\\
9,&n=7,\\
15,&n=8,\\
20,&n=9,\\
30,&n=10,\\
45,&n=11,\\
90,&n=13,\\
2^{6}\cdot 3^{\frac{n-12}{3}},&n\equiv 0\mod 3,n>9,\\
2^{8}\cdot 3^{\frac{n-16}{3}},&n\equiv 1\mod 3,n>13,\\
2^{7}\cdot 3^{\frac{n-14}{3}},&n\equiv 2\mod 3,n>11.
\end{array}
\right.
\]
\end{lemma}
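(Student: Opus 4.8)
The plan is to prove Lemma~\ref{dritte} by the mechanism that already produced the leading term \eqref{eq:asymp} and the subleading term $D_\ell$ of Lemma~\ref{zweite}: write $N_\ell(n)$ as an honest finite exponential sum in $\ell$ and read off the next ``frequency'' down. By \eqref{local} and multiplicativity of $g_\ell$, for each fixed $m$ there is an identity $g_\ell(m)=\sum_{d\mid m}\gamma_d(m)\,d^{\ell}$ with explicit rationals $\gamma_d(m)$; the leading coefficient $\gamma_m(m)=\prod_{p^{e}\| m}p^{\binom e2}\big/\prod_{j=1}^{e}(p^{j}-1)$ is positive, while the first few $\gamma_d(m)$ with $d<m$ are just as explicit and of mixed sign. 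Inserting this into \eqref{explicit} and expanding the product yields $N_\ell(n)=\sum_{q}C_q(n)\,q^{\ell}$, where $q$ runs over all products $q=d_1\cdots d_k$ arising from a partition $m_1+\dots+m_k=n$ together with a choice of divisor $d_i\mid m_i$, and $C_q(n)$ is the attached finite rational.

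I would then track the top three terms. The largest frequency is $M_1(n)$, realized by packing $n$ into $3$'s with a residual $2+2$ resp.\ $2$ according to $n\bmod 3$; since each part is its own divisor there and $\gamma_m(m)>0$, its coefficient is a single positive term and one recovers $B_\ell(n)$. The next is the base of $D_\ell$: trading one block $3+3$ of value $9$ for a block $2+2+2$ of value $8$ lowers the frequency by the factor $8/9$, again with an essentially unique realization, which recovers Lemma~\ref{zweite}. For the next-to-subleading term I would push the same optimization one step further: a second $9\leadsto 8$ trade gives $(8/9)^{2}M_1(n)$, that is $2^{6}\cdot3^{(n-12)/3}$, $2^{8}\cdot3^{(n-16)/3}$, $2^{7}\cdot3^{(n-14)/3}$ for $n\equiv0,1,2\bmod 3$, which is the $M_3(n)$ of the table once $n$ exceeds the stated bound. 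For the small $n$ not reached by these three generic formulas — that is, $3\le n\le 11$ and $n=13$, where a second trade is impossible or a single part divisible by $5$ yields a larger product — one checks directly that the appropriate competitor realizes the tabulated value $1,2,4,6,9,15,20,30,45,90$; these follow from the explicit expansions in Lemma~\ref{klein} together with the analogous expansions for the remaining small $n$, computed the same way from \eqref{explicit}, and can be confirmed by a short PARI/GP run, as elsewhere in the paper.

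The real work is the combinatorial optimization of the second paragraph pushed down to the \emph{third} surviving frequency, uniformly in each residue of $n$ mod $3$: one must show that no partition-with-divisor datum — in particular none using a part divisible by a prime $\ge 5$, and none built from the non-leading pieces $\gamma_d(m)\,d^{\ell}$ ($d<m$) of some factor $g_\ell(m)$ — produces a frequency strictly between the second and third entries of the table, and it is precisely the $5$-involving partitions that make the small-$n$ values sporadic, so these have to be accounted for carefully across the whole range. One must also check that at each of the three levels the realizing datum is unique, so that $C_q(n)$ is a single nonzero term, and pin down exactly which small $n$ escape the generic formulas. Everything after that — the expansion of $g_\ell$, the rearrangement of \eqref{explicit}, and the numerical checks — is routine and parallels the arguments already in place for $M_1$, for $D_\ell$, and in Lemma~\ref{klein}.
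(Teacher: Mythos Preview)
Your approach is correct and matches the paper's own: the authors give no proof beyond the remark that it ``is similar [to] that of the leading or subleading term'', i.e., exactly the exponential-sum expansion of $N_\ell(n)$ via \eqref{explicit} and the divisor expansion of $g_\ell$, followed by the partition-product optimization you describe. One small caveat: the realizing datum is generally \emph{not} unique (for instance $2+2+2$ and $4+2$ both contribute to the base $8$, and $4+3+\cdots+3$ and $2+2+3+\cdots+3$ both realize $M_1(n)$ when $n\equiv 1\bmod 3$), so what must be checked is that the combined coefficient $C_q(n)$ is nonzero rather than that a single term survives; this is routine and does not affect the method.
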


\begin{remark}
For $n
>1
3$ we can now estimate the growth of
$N_{\ell }\left( n\right) $ in $\ell $ in the
following way:
\[
\left| N_{\ell }\left( n\right) -B_{\ell }\left( n\right) -D_{\ell }\left( n\right) \right| 
\leq
\begin{array}{ll}
p\left( n\right) \left( 2^{6}\cdot 3^{\frac{n-12}{3}}\right) ^{\ell -1}
,&n\equiv 0\pmod{3},\\
p\left( n\right) \left( 2^{8}\cdot 3^{\frac{n-16
}{3}}\right) ^{\ell -1},&n\equiv 1\pmod{3},\\
p\left( n\right) \left( 2^{7}
\cdot 3
^{\frac{n-14}{3}}\right) ^{\ell -1},
&n\equiv 2\pmod{3}.
\end{array}
\]
For $B_{\ell }\left( n\right) $ and
$D_{\ell }\left( n\right) $ see
Lemma~\ref{zweite}.
\end{remark}

\subsubsection{
Case $(a,b)=(6,2)$ and $(6,4)$}

Using Lemma~\ref{dritte}:
\begin{eqnarray*}
N_{\ell }\left( 6\right) &\geq& \frac{1}{8}9^{\ell -1}
+\frac{5}{6}8^{\ell -1},\\
N_{\ell }\left( 8\right) &\leq& \frac{1}{48}18^{\ell -1}+\frac{43}{72}16^{\ell -1}+22\cdot 15^{\ell -1}.
\end{eqnarray*}
This leads to
\begin{eqnarray*}
&&N_{\ell }\left( 6\right) N_{\ell }\left( 2\right) -N_{\ell }\left( 8\right) \\
&\geq& 
\left( \frac{1}{48}9^{\ell -1}+\frac{5}{6}8^{\ell -1}\right) 2^{\ell -1}-\frac{1}{48}18^{\ell -1}
-\frac{43}{72}16^{\ell -1}-22\cdot 15^{\ell -1}\\
&=&\frac{17}{72}16^{\ell -1}-22\cdot 15^{\ell -1}>0.
\end{eqnarray*}
if
$\ell \geq 72>1+\log _{16/15}\left( 22\frac{72}{17}\right) $. We checked
that also in the cases
$3\leq \ell \leq 71$
this is positive. In the case $\ell =2$ this
$=0$ as is known from \cite{BO16}.

Let $(a,b)=(6,4)$. We prove that $\Delta_{6,4}^{\ell} >0$ for $\ell \geq 2$.
Using Lemma~\ref{zweite}
und~\ref{dritte} we can observe that
$$ N_{\ell }\left( 10\right) \leq \frac{7}{48}36^{\ell -1}+\frac{41}{120}32^{\ell -1}+42\cdot 30^{\ell -1}$$
and obtain
$N_{\ell }\left( 6\right) N_{\ell }\left( 4\right) -N_{\ell }\left( 10\right) \geq \frac{227}{360}32^{\ell -1}-42\cdot 30^{\ell -1}>0$
if
$\ell \geq 52>1+\log _{16/15}\left( 42\frac{360}{227}\right) $.
We checked the remaining cases
$2\leq \ell \leq 51$ with PARI/GP.

\subsubsection{
Case $(a,b)=(9,2)$ and $(9,4)$}

Using Lemma~\ref{dritte} we observe that
\begin{eqnarray*}
N_{\ell }\left( 9\right) &\geq& \frac{1}{48}27^{\ell -1}+\frac{5}{12}24^{\ell -1},\\
N\left( 11\right) &\leq& \frac{1}{48}54^{\ell -1}+\frac{43}{144}48^{\ell -1}+56\cdot 45^{\ell -1}
\end{eqnarray*}
and obtain
$$N_{\ell }\left( 9\right) N_{\ell }\left( 2\right) -N_{\ell }\left( 11\right) \geq \frac{17}{144}48^{\ell -1}-56\cdot 45^{\ell -1}>0$$
if
$\ell \geq 97>1+\log _{16/15}\left( 144/17\right) +\log _{16/15}\left( 56\right) $.
We checked the cases $2\leq \ell \leq 96$
with PARI/GP.

Let $(a,b)=(9,4)$. 
We can observe using Lemma~\ref{dritte}
that
$N_{\ell }\left( 13\right) \leq \frac{7}{288}108^{\ell -1}+\frac{41}{240}96^{\ell -1}+101\cdot 90^{\ell -1}$
and obtain
\begin{eqnarray*}
&&N_{\ell }\left( 9\right) N_{\ell }\left( 4\right) -N_{\ell }\left( 13\right) \\
&\geq& 
\left( \frac{1}{48}27^{\ell -1}+\frac{5}{12}24^{\ell -1}\right) \frac{7}{6}4^{\ell -1}-\frac{7}{288}108^{\ell -1}
-\frac{41}{240}96^{\ell -1}-101\cdot 90^{\ell -1}\\ &\geq& \frac{227}{720}96^{\ell -1}-101\cdot 90^{\ell -1}>0. \end{eqnarray*}
if
$\ell \geq 91>1+\log _{16/15}\left( 101\frac{720}{227}\right) $.
The remaining cases $\ell <90$
are verified positive 
via PARI/GP.

\subsubsection{
Case $a>9$ and $a\equiv 0 \pmod{3}$ and $b=2,4$}
We start with $b=4$.
Using Lemma~\ref{klein} and Lemma \ref{dritte} we observe that
\begin{eqnarray*}
N_{\ell }\left( 4\right)&=&\frac{7}{6}4^{\ell -1}+\frac{3}{2}3^{\ell -2}-2^{\ell -2}-\frac{1}{2}\geq \frac{7}{6}4^{\ell -1},\\
N_{\ell }\left( a\right) &\geq& \frac{1}{2^{\frac{a}{3}}\left( \frac{a}{3}\right) !}3^{\left( \ell -1\right) a/3}+\frac{5}{6}\frac{1}{2^{\frac{a-6}{3}}\left( \frac{a-6}{3}\right) !}\left( 2^{3}\cdot 3^{\frac{a-6}{3}}\right) ^{\ell -1},\\
N_{\ell }\left( a+4\right) &\leq&  \frac{7}{6}\frac{1}{2^{\frac{a-4}{3}}\left( \frac{a-4}{3}\right) !}\left( 4\cdot 3^{\frac{a-4}{3}}\right) ^{\ell -1}+\frac{41}{120}\frac{1}{2^{\frac{a-6}{3}}\left( \frac{a-6}{3}\right) !}\left( 2^{5}\cdot 3^{\frac{a-6}{3}}\right) ^{\ell -1}\\
& & {}+ p\left( a+4\right) \left( 2^{8}\cdot 3^{\frac{a-12}{3}}\right) ^{\ell -1}.
\end{eqnarray*}
Then $N_{\ell }\left( a\right) N_{\ell }\left( 4\right) -N_{\ell }\left( a+4\right)$
\begin{eqnarray*}
&\geq &\left( \frac{1}{2^{\frac{a}{3}}\left( \frac{a}{3}\right) !}3^{\left( \ell -1\right) a/3}+\frac{5}{6}\frac{1}{2^{\frac{a-6}{3}}\left( \frac{a-6}{3}\right) !}\left( 2^{3}\cdot 3^{\frac{a-6}{3}}\right) ^{\ell -1}\right) \frac{7}{6}4^{\ell -1}-\frac{7}{6}\frac{1}{2^{\frac{a}{3}}\left( \frac{a}{3}\right) !}\left( 4\cdot 3^{\frac{a}{3}}\right) ^{\ell -1}\\
& &{}-\frac{41}{120}\frac{1}{2^{\frac{a-6}{3}}\left( \frac{a-6}{3}\right) !}\left( 2^{5}\cdot 3^{\frac{a-6}{3}}\right) ^{\ell -1}-p\left( a+4\right) \left( 2^{8}\cdot 3^{\frac{a-12}{3}}\right) ^{\ell -1}\\
&=&\frac{227}{360}\frac{1}{2^{\frac{a-6}{3}}\left( \frac{a}{3}\right) !}\left( 2^{5}\cdot 3^{\frac{a-6}{3}}\right) ^{\ell -1}-p\left( a+4\right) \left( 2^{8}\cdot 3^{\frac{a-12}{3}}\right) ^{\ell -1}\\
&>&\left( \frac{227}{360}\frac{1}{2^{\frac{a-6}{3}}\left( \frac{a}{3}\right) ^{\frac{a}{3}}}-2^{a+4}\left( \frac{8}{9}\right) ^{\ell -1}\right) \left( 2^{5}\cdot 3^{\frac{a-6}{3}}\right) ^{\ell -1}\geq 1
\end{eqnarray*}
if
$\ell \geq 1+\log _{9/8}\left( \frac{360}{227}\right) +\frac{a-6}{3}\log _{9/8}\left( 2\right) +\frac{a-6}{3}\log _{9/8}\left( \frac{a-6}{3}\right) +\left( a+4\right) \log _{9/8}\left( 2\right) $.

Finally, let $b=2$. Then by Lemma~\ref{dritte} we obtain
\begin{eqnarray*}
N_{\ell }\left( a+2\right) &\leq& \frac{1}{2^{\frac{a}{3}}\left( \frac{a}{3}\right) !}\left( 2\cdot 3^{\frac{a}{3}}\right) ^{\ell -1}+\frac{43}{72}\frac{1}{2^{\frac{a-6}{3}}\left( \frac{a-6}{3}\right) }\left( 2^{4}\cdot 3^{\frac{a-6}{3}}\right) ^{\ell -1}\\ & & +p\left( a+2\right) \left( 2^{7}\cdot 3^{\frac{a-12}{3}}\right) ^{\ell -1}.
\end{eqnarray*}
This leads to $N_{\ell }\left( a\right) N_{\ell }\left( 2\right) -N_{\ell }\left( a+2\right)$
\begin{eqnarray*}
&\geq&  
\left( \frac{1}{2^{\frac{a}{3}}\left( \frac{a}{3}\right) !}3^{\left( \ell -1\right) a/3}+\frac{5}{6}\frac{1}{2^{\frac{a-6}{3}}\left( \frac{a-6}{3}\right) !}\left( 2^{3}\cdot 3^{\frac{a-6}{3}}\right) ^{\ell -1}\right) 2^{\ell -1}
\\
& & {}-\frac{1}{2^{\frac{a}{3}}\left( \frac{a}{3}\right) !}\left( 2\cdot 3^{\frac{a}{3}}\right) ^{\ell -1}+\frac{43}{72}\frac{1}{2^{\frac{a-6}{3}}\big( \frac{a-6}{3}\big) !}\left( 2^{4}\cdot 
3^{\frac{a-6}{3}}\right)^{\ell -1}\\ 
& & {}+p \left( a+2 \right) 
\left( 2^{7}\cdot 
3^{\frac{a-12}{3}}\right) ^{\ell -1}\\ &=&\frac{17}{72}\frac{1}{2^{\frac{a-6}{3}}\left( \frac{a-6}{3}\right) !}\left( 2^{4}\cdot 3^{\frac{a-6}{3}}\right) ^{\ell -1}-p\left( a+2\right) \left( 2^{7}\cdot 
3^{\frac{a-12}{3}}\right) ^{\ell -1}\\ &>& \frac{17}{72}\frac{1}{2^{\frac{a-6}{3}}\left( \frac{a-6}{3}\right) ^{\frac{a-6}{3}}}\left( 2^{4}\cdot 3^{\frac{a-6}{3}}\right) -2^{a+2}\left( 2^{7}\cdot 3^{\frac{a-12}{3}}\right) ^{\ell -1}\geq 0
\end{eqnarray*}
if
$\ell \geq 1+\log _{9/8}\left( 72/17\right) +\frac{a-6}{3}\log _{9/8}\left( 2\right) +\frac{a-6}{3}\log _{9/8}\left( \frac{a-6}{3}\right) +\left( a+2\right) \log _{9/8}\left( 2\right) $.
\section{Proof of Theorem \ref{th:unbounded}}
It follows from Theorem \ref{th:explicit 2} b) ii)
for $a,b \equiv 1 \pmod{3}$, $b>1$, that there is a $L_{a,b}(\ell)$
such that for $\ell \geq L_{a,b}(\ell)$ we have $\Delta_{a,b}^{\ell}<0$. Let $L_{a,b}(\ell)$ be equal to
\begin{equation*}
-6+9 (a+b)+\frac{(a+b)/2+2}{3}\log _{9/8}\left( \frac{(a+b)-2}{3}\right).
\end{equation*}
Therefore, 
for given $L
$, 
there exist always $a,b >1$, $a,b \equiv 1 \pmod{3}$ such that $L_{a,b}(\ell) > L$.

\end{document}